\title{On Fast Implementation of Higher Order Hermite-Fej\'{e}r Interpolation\thanks{This work was
supported by National Science Foundation of China (No. 11371376).}}
\author{Shuhuang Xiang$^{\dag}$ and Guo He$^\ddag$\thanks{Department of Applied
Mathematics and Software, Central South University, Changsha, Hunan
410083, P. R. China.}$^{\ddag}$\footnotesize{Corresponding author}}
\begin{document}
\maketitle

\begin{abstract}
The problem of barycentric Hermite interpolation is highly susceptible to overflows or
underflows.  In this paper, based on Sturm-Liouville equations for Jacobi orthogonal polynomials, we consider the fast implementation on the second barycentric  formula for higher order Hermite-Fej\'{e}r interpolation at Gauss-Jacobi or Jacobi-Gauss-Lobatto pointsystems, where the barycentric weights can be efficiently evaluated and cost linear operations corresponding to the number of grids totally. Furthermore, due to the division of the second barycentric form, the exponentially increasing common factor in the barycentric weights can be canceled, which yields a superiorly stable method for computing the simplified barycentric weights, and leads to a fast implementation of the higher order Hermite-Fej\'{e}r interpolation with linear operations on the number of grids. In addition, the convergence rates are derived for Hermite-Fej\'{e}r interpolation at Gauss-Jacobi pointsystems.
\end{abstract}

\begin{keywords} Hermite-Fej\'{e}r interpolation, barycentric, Jacobi polynomial, Gauss-Jacobi point, Lobatto-Gauss-Jacobi point, Chebyshev point.
\end{keywords}

\begin{AMS} 65D05, 65D25
\end{AMS}

\pagestyle{myheadings}
\thispagestyle{plain}

\section{Introduction}
There are many investigations for the behavior of continuous functions approximated by polynomials. Weierstrass \cite{Weierstrass} in 1885 proved the well known result that every continuous function $f(x)$ in $[-1,1]$ can be uniformly approximated as closely as desired by a polynomial function. This result has both practical and theoretical relevance, especially in polynomial interpolation.

Polynomial interpolation is a fundamental tool in many areas of numerical analysis. Lagrange interpolation is a well known, classical technique for approximation of continuous functions. Let us denote by
\begin{equation}\label{fundamentalpoints}
x_1^{(n)},\,\,\, x_2^{(n)},\,\,\, \ldots,\,\,\, x_n^{(n)}
\end{equation}
the $n$ distinct points in the interval $[-1,1]$ and let $f(x)$ be a function
defined in the same interval. The $n$th Lagrange interpolation
polynomial of $f(x)$ is uniquely  defined by the formula
\begin{equation}\label{lagrange}
L_n[f] =\sum_{k=1}^nf(x_k^{(n)})\ell_k^{(n)}(x),\quad \ell_k^{(n)}(x)=\frac{\omega_n(x)}{\omega'_n(x_k^{(n)})(x-x_k^{(n)})},
\end{equation}
where $\omega_n(x)=(x-x_1^{(n)})(x-x_2^{(n)})\cdots(x-x_n^{(n)})$. However, for an arbitrarily given
system of points $\{x_1^{(n)},x_2^{(n)},\ldots,x_n^{(n)}\}_{n=1}^{\infty}$, Bernstein \cite{Bernstein1914} and Faber \cite{Faber}, in 1914, respectively, showed that there exists a  continuous function $f(x)$ in $[-1,1]$ for which the sequence $L_n[f]$ ($n=1,2,\ldots$) is not uniformly convergent to $f$ in $[-1,1]$\footnote{A very simple proof was given by Fej\'{e}r \cite{Fejer1930} in 1930.} . Additionally, Bernstein \cite{Bernstein1931} proved that there exists a continuous function $f(x)$ also for which the sequence $L_n[f]$ is divergent. Particularly, Gr\"{u}nwald \cite{Grunwald1935} in 1935 and Marcinkiewicz \cite{Marcinkiewicz} in 1937, independently, showed that even for the Chebyshev points of first kind
\begin{equation}
x_k^{(n)}=\cos\left(\frac{2k-1}{2n}\pi\right),\quad k=1,2,\ldots,n,\quad n=1,2,\ldots,
\end{equation}
there is a continuous  function $f(x)$ in $[-1,1]$ for which the sequence $L_n[f]$ is divergent everywhere in $[-1,1]$.

\subsection{(Higher order) Hermite-Fej\'{e}r interpolation}
One of the proofs of Weierstrass¡¯ approximation theorem using interpolation
polynomials was presented by Fej\'{e}r \cite{Fejer1916} in 1916  based on the above Chebyshev pointsystem (1.3): If $f\in C[-1,1]$, then there is a unique polynomial $H_{2n-1}(f,x) $ of degree at most
$2n - 1$ such that $\lim_{n\rightarrow \infty}\|H_{2n-1}(f)-f\|_{\infty}=0$, where $H_{2n-1}(f,x)$ is determined by
\begin{equation}
H_{2n-1}(f,x_k^{(n)})=f(x_k^{(n)}),\quad H_{2n-1}'(f,x_k^{(n)})=0,\quad k = 1, 2,\ldots, n.
\end{equation}
This polynomial is known as the Hermite-Fej\'{e}r interpolation polynomial.

The convergence result has been extended to general Hermite-Fej\'{e}r interpolation of $f(x)$  at nodes (1.1), upon strongly normal pointsystems introduced by Fej\'{e}r \cite{Fejer1932a}: Given, respectively,
the function values $f(x_1^{(n)})$, $f(x_2^{(n)})$, $\ldots$, $f(x_n^{(n)})$ and derivatives
$d_1^{(n)}$, $d_2^{(n)}$,$\ldots$, $d_n^{(n)}$ at these grids, the Hermite-Fej\'{e}r interpolation polynomial $ H_{2n-1}(f)$ has the form of \begin{equation}\label{hermite-fejer}
\quad \,\, H_{2n-1}(f,x) =\sum_{k=1}^nf(x_k^{(n)})h_k^{(n)}(x)+\sum_{k=1}^nd_k^{(n)}b_k^{(n)}(x),
\end{equation}
where $h_k^{(n)}(x)=v_k^{(n)}(x)\left(\ell_k^{(n)}(x)\right)^2$, $b_k^{(n)}(x)=(x-x_k^{(n)})\left(\ell_k^{(n)}(x)\right)^2$ and
$$
 v_k^{(n)}(x)=1-(x-x_k^{(n)})\frac{\omega_n''(x_k^{(n)})}{\omega_n'(x_k^{(n)})}\mbox{\quad (see Fej\'{e}r \cite{Fejer1932b}).}
$$

The pointsystem (1.1) is called  strongly normal if for all $n$
\begin{equation}\label{stronglynormal}
 v_k^{(n)}(x)\ge c>0,\quad k=1,2,\ldots,n,\quad x\in [-1,1]
\end{equation}
for some positive constant $c$. The pointsystem (1.1) is called  normal if for all $n$
\begin{equation}\label{normal}
 v_k^{(n)}(x)\ge 0,\quad k=1,2,\ldots,n,\quad x\in [-1,1].
\end{equation}
Fej\'{e}r \cite{Fejer1932a} (also see Szeg\"{o} \cite[pp 339]{Szego}) showed that for the zeros of Jacobi polynomial $P_n^{(\alpha,\beta)}(x)$ of degree $n$ ($\alpha>-1$, $\beta>-1$)
$$
{\small \quad \quad v_k^{(n)}(x)\ge \min\{-\alpha,-\beta\}\mbox{\quad for $-1<\alpha\le 0$, $-1<\beta\le 0$, $k=1,2,\ldots,n$ and $x\in [-1,1]$}.}
$$
While for the Legendre-Gauss-Lobatto pointsystem (the roots of {\small$(1-x^2)P_{n-2}^{(1,1)}(x)=0$}),
$$
 v_k^{(n)}(x)\ge 1,\quad k=1,2,\ldots,n,\quad x\in [-1,1].
$$
This result is extended to Jacobi-Gauss-Lobatto pointsystem (the roots of $(1-x^2)P_{n-2}^{(\alpha,\beta)}=0$) and  Jacobi-Gauss-Radau pointsystem (the roots of $(1-x)P_{n-1}^{(\alpha,\beta)}=0$ or $(1+x)P_{n-1}^{(\alpha,\beta)}=0$) by V\'{e}rtesi \cite{Vertesi1979a,Vertesi1979b}: for all $k$ and $x\in [-1,1]$,
$$
 v_k^{(n)}(x)\ge \min\{2-\alpha,2-\beta\}\mbox{\small\quad for $\{x_{k}^{(n-2)}\}\bigcup \{-1,1\}$ with  $1\le\alpha\le 2$ and $1\le\beta\le 2$,}
$$
$$
 v_k^{(n)}(x)\ge \min\{2-\alpha,-\beta\}\mbox{\quad for $\{x_{k}^{(n-1)}\}\bigcup \{1\}$ with  $1\le\alpha\le 2$ and $-1<\beta\le 0$,}
$$
$$
\quad\quad v_k^{(n)}(x)\ge \min\{-\alpha,2-\beta\}\mbox{\quad for $\{x_{k}^{(n-1)}\}\bigcup \{-1\}$ with  $-1<\alpha\le 0$ and $1\le\beta\le 2$}.
$$

Based upon the (strongly) normal pointsystem, Gr\"{u}nwald  \cite{Grunwald1942} in 1942 showed that for every $f\in C[-1,1]$, $\lim_{n\rightarrow \infty}\|H_{2n-1}(f)-f\|_{\infty}=0$ if $\{x_k^{(n)}\}$ is strongly normal satisfying (1.6) and $\{d_k^{(n)}\}$ satisfies
$$
|d_k^{(n)}|<n^{c-\delta} \mbox{\quad for some given positive number $\delta$},\quad k=1,2,\ldots,\quad n=1,2,\ldots,$$ while  $\lim_{n\rightarrow \infty}\|H_{2n-1}(f)-f\|_{\infty}=0$ in $[-1+\epsilon,1-\epsilon]$ for each fixed $0<\epsilon<1$ if  $\{x_k^{(n)}\}$ is  normal and $\{d_k^{(n)}\}$ is uniformly bounded for $n=1,2,\ldots$.

To get fast convergence on suitable smooth functions, higher order Hermite-Fej\'{e}r interpolation polynomials were considered in Goodenough and Mills \cite{GooMil1981}, Sharma and Tzimbalario \cite{ShaTzi1975}, Szabados \cite{Sza1993}, V\'{e}rtesi \cite{Vertesi1989}, etc.: for $p=0,1,\ldots,m-1$ and $q=1,2,\ldots,n$,
\begin{equation}
H_{mn-1}(f,x)=\sum_{k=1}^n\sum_{j=0}^{m-1}f^{(j)}(x_k^{(n)})A_{jk}(x), \quad H_{mn-1}^{(p)}(f,x_q^{(n)})=f^{(p)}(x_q^{(n)}),
\end{equation}
where the polynomial $A_{jk}(x)$ of degree at most $mn-1$ satisfies
\begin{equation}
  A_{jk}^{(p)}(x_q^{(n)})=\delta_{jp}\delta_{kq},\quad j=0,1,\ldots,m-1,\quad k=1,2,\ldots,n,
\end{equation}
and $\delta$ is the Kronecker delta function. For simplicity, in the following we abbreviate $x_k^{(n)}$ as $x_k$, $\ell_k^{(n)}(x)$ as $\ell_k(x)$, $h_k^{(n)}(x)$ as $h_k(x)$, and $b_k^{(n)}(x)$ as $b_k(x)$.

The convergences of the higher order Hermite-Fej\'{e}r interpolation polynomials  have been extensively studied (see e.g. Byrne et al. \cite{ByrMilSim1993}, Goodenough and Mills \cite{GooMil1981b}, Locher \cite{Locher}, Mathur and Saxena \cite{MatSax1967}, Moldovan \cite{Moldovan}, Nevai and V\'{e}rtesi \cite{Nevai}, Popoviciua \cite{Popoviciua}, \cite{ShaTzi1975}, Shi \cite{Shi1994,Shi2000}, Shisha et al. \cite{Shisha}, Sun \cite{Sun1983}, Szili \cite{Szi2008}, Vecchia et al. \cite{Della}, V\'{e}rtesi \cite{Vertesi1979a,Vertesi1983} etc.). The convergence rates are achieved most on  Gauss-Jacobi or Jacobi-Gauss-Lobatto pointsystems. As is well
known in approximation theory, the right approach is to use point sets that are clustered at the endpoints of the interval with an asymptotic density proportional to
$(1 - x^2)^{-1/2}$ as $n\rightarrow \infty$ emphasised, for example, in Berrut and Trefethen \cite{Berrut2004} and Trefethen \cite{Trefethenbook}. Hence, in this paper we confine ourselves to Gauss-Jacobi or Jacobi-Gauss-Lobatto pointsystems.


\subsection{Barycentric forms and implementation on general Hermite interpolation}
In general, the Hermite interpolation is to find a polynomial $H_{N-1}(f,x)$ of degree at most $N-1$ such that
$$
\frac{d^r}{dx^r}H_{N-1}(f,x)\bigg|_{x=x_k}=f_{k,r}\quad {\rm for}\quad r=0,1,\ldots,n_k-1,
$$
where $f_{k,0},f_{k,1},\ldots,f_{k,n_k-1}$ denote the function value and its first $n_k-1$ derivatives at the interpolation grid points $x_k$ ($k=1,2,\ldots,n$), respectively,
and $N=n_1+n_2+\cdots+n_n$.

The polynomial $H_{N-1}(f,x)$ can be represented in either the Newton form or the barycentric
form. In the Newton form, the grid points $x_k$ must be ordered  in a special way (see Schneider and Warner \cite{Schneider}).
If the grid points are not carefully ordered, the Newton form is susceptible to catastrophic
numerical instability. For more details, see Fischer and Reichel \cite{Fischer}, Tal-Ezer \cite{Tal}, Berrut and Trefethen \cite{Berrut2004}, Butcher et al. \cite{Butcher2011} and Sadiq and  Viswanath \cite{Sadiq2013}. In contrast, the barycentric form does not depend on the order
in which the nodes are arranged, which treats all the grid points equally. Barycentric interpolation is arguably the method of choice for numerical polynomial
interpolation.

The first barycentric formula for the Hermite interpolation is of the form of
\begin{equation}\label{bary form1}\begin{array}{lll}
  H_{N-1}(f,x) &=&H^{*}(f,x)\sum_{k=1}^n\frac{f_{k,n_k-1}}{(n_k-1)!}\left(\frac{w_{k,0}}{x-x_k}\right)+\frac{f_{k,n_k-2}}{(n_k-2)!}
  \left(\frac{w_{k,0}}{(x-x_k)^2}+\frac{w_{k,1}}{x-x_k}\right) \\
  &&+\cdots+f_{k,0}\left(\frac{w_{k,0}}{(x-x_k)^{n_k}}+\cdots+\frac{w_{k,n_k-1}}{x-x_k}\right),
\end{array}\end{equation}
where $H_N^{*}(f,x)=\prod_{k=1}^n(x-x_k)^{n_k}$ and $w_{k,r}$ is called the barycentric weights. Applying $1\equiv H_N^{*}(f,x)\sum_{k=1}^n\sum_{r=0}^{n_k-1}w_{k,r}(x-x_k)^{r-n_k}$ derives the second barycentric form\footnote{Two typos occur in (1.4) \cite{Sadiq2013}: $(x-x_k)^{n_k-r-s}$ should be $(x-x_k)^{r+s-n_k}$ and $(x-x_k)^{n_k-r}$ be $(x-x_k)^{r-n_k}$.}
\begin{equation}\label{bary form2}
  H_{N-1}(f,x)=\frac{\sum_{k=1}^n\sum_{s=0}^{n_k-1}\frac{f_{k,s}}{s!}\sum_{r=0}^{n_k-s-1}w_{k,r}(x-x_k)^{r+s-n_k}}{\sum_{k=1}^n\sum_{r=0}^{n_k-1}w_{k,r}(x-x_k)^{r-n_k}}
\end{equation}
(see e.g. \cite{Butcher2011,Sadiq2013}).

The second barycentric form is more robust in the presence of rounding
errors in the weights $w_{k,r}$. It is obvious from inspection that either of the two forms can be used to evaluate the
interpolant $H_{N-1}(f,x)$ at a given point $x$ using $O(\sum_{k=1}^n n_k^2)$ arithmetic operations once the barycentric weights $w_{k,r}$ are known.

Schneider and
Werner \cite{Schneider} used divided differences to evaluate the barycentric weights.  This method
requires $n(n - 1)/2$ divisions, $(N^2-\sum_{k=1}^n n_k^2)/2$ multiplications and  about the same number of subtractions or additions \cite{Sadiq2013}. However, the numerical stability depends upon a good ordering of the grid points as mentioned above. Moreover, Newton interpolation requires the recomputation of the divided difference tableau for
each new function.

Butcher et al. \cite{Butcher2011} introduced an efficient method, compared with that in \cite{Schneider}, for computing the barycentric
weights, which is derived by using contour integrals and
the manipulation of infinite series. More recently, Sadiq and Viswanath \cite{Sadiq2013} gave another more direct and simple derivation of this method: Calculating the barycentric weights is to find the coefficients
in the Taylor polynomial of expressions of the form
$$
\prod_{j\not=k}(x + x_k - x_j)^{-n_j}=\prod_{j\not=k}(x_k - x_j)^{-n_j}\prod_{j\not=k}\left(1-\frac{x-x_k}{x_k-x_j}\right)^{-n_j}
$$
which can be obtained by the following recursion
$$
I_{k,r} =\sum_{s=1}^rP_{k,s}I_{k,r-s}/r,\quad I_{k,0}=1,\quad P_{k,r} =\sum_{j\not=k}n_j(x_j-x_k)^{-r},\quad r=1,2,\ldots,n_k-1,
$$
and then $w_{k,r} = C_kI_{k,r}$ with $C_k =\prod_{j\not=k}(x_k - x_j)^{-n_j}$. It costs $O(n\sum_{k=1}^n n_k+\sum_{k=1}^n n_k^2)$ multiplications. Roughly
half these operations are additions or subtractions and roughly half are multiplications. Furthermore, if an additional derivative is prescribed at one of the interpolation points, update the barycentric coefficients use only $O(N)$ operations \cite{Sadiq2013}.

Notice that the barycentric Hermite interpolation problem is highly susceptible to overflows or
underflows. The weights $w_{k,r}$ in (1.10) and (1.11) usually vary by exponentially large factors.  Figures 1.1-1.2 illustrate the magnitudes of the barycentric weights computed by the method of Sadiq and Viswanath \cite{Sadiq2013} at the Chebyshev pointsystem (1.3) or Legendre pointsystem with different multiple number $m$ of derivatives  ($n_1=n_2=\cdots=n_n=m$). We can see from these two figures that the barycentric weights become extremely large while the number of points and the multiple number of derivatives are not so large, which will lead to overflows\footnote{In {\sc Matlab}, the largest positive normalized floating-point number in IEEE double precision is $ (1+(1-2^{-52}))2^{1023}\approx 1.7977\times10^{308}$, the smallest positive normalized floating-point number in IEEE double precision is $2^{-1022} \approx
2.225\times10^{-308}$.} for larger $n$ or $m$. Table 1.1 shows the threshold $S$ that the algorithm suffers overflows  for computation of the weights if $n\ge S$ with different $m$, respectively.

\begin{figure}[htbp]
\centerline{\includegraphics[width=1.8in]{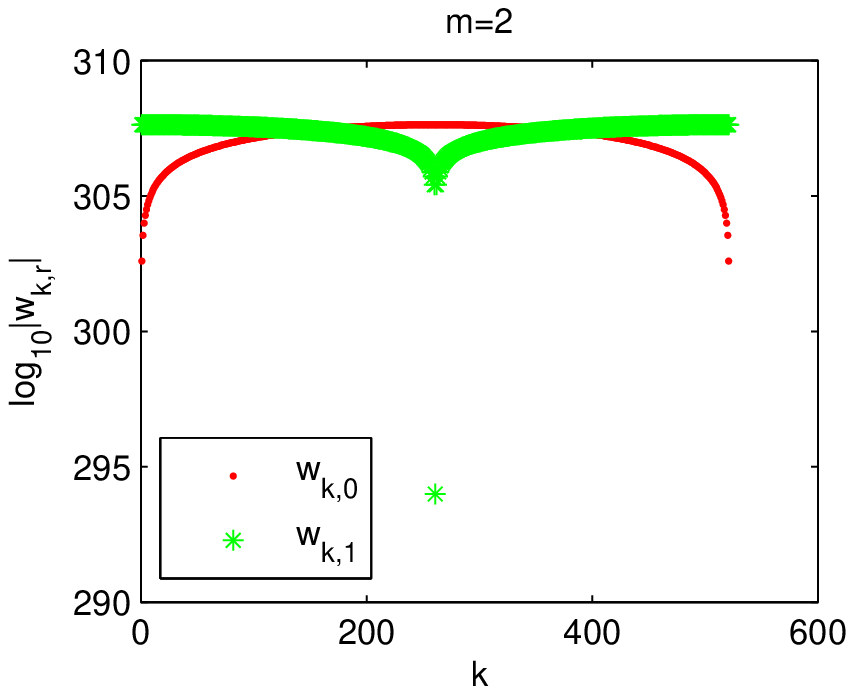} \includegraphics[width=1.8in]{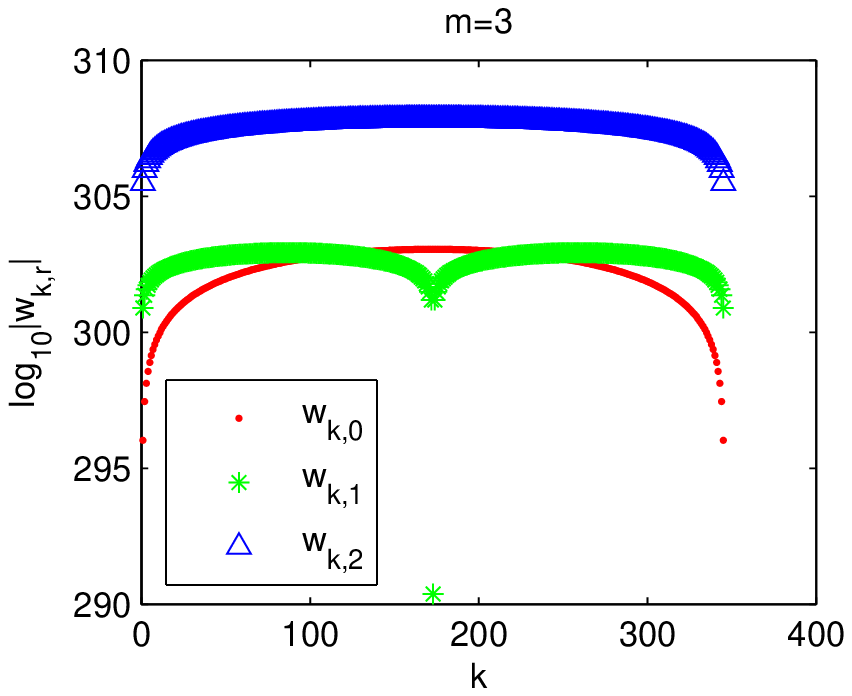}
  \includegraphics[width=1.8in]{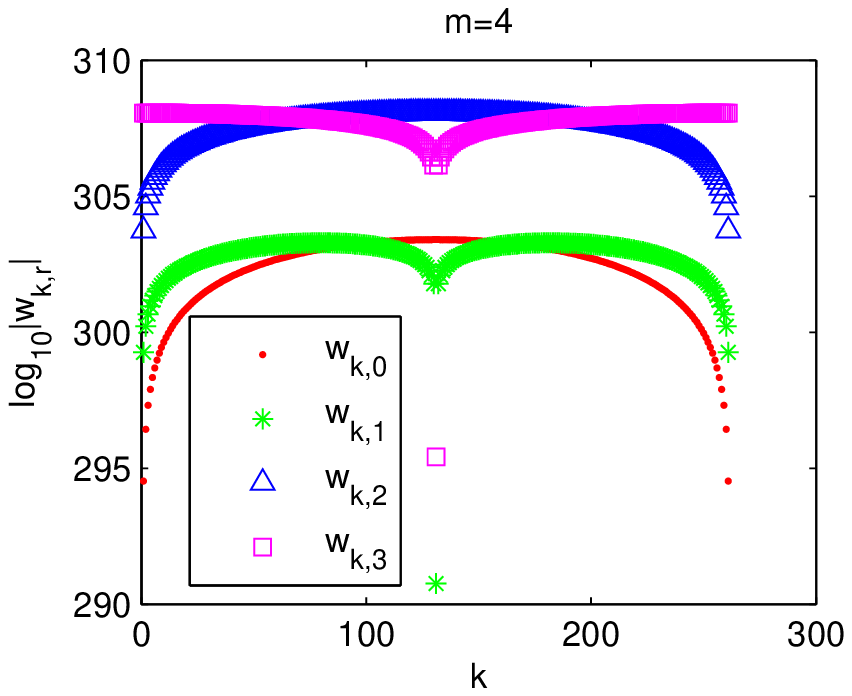}}
  \caption{Magnitudes of the barycentric weights $w_{k,r}$  by method of Sadiq and Viswanath interpolating at Chebyshev pointsystem (1.3).}\label{chebyshev points}
  \end{figure}

\begin{figure}[htbp]
\centerline{\includegraphics[width=1.8in]{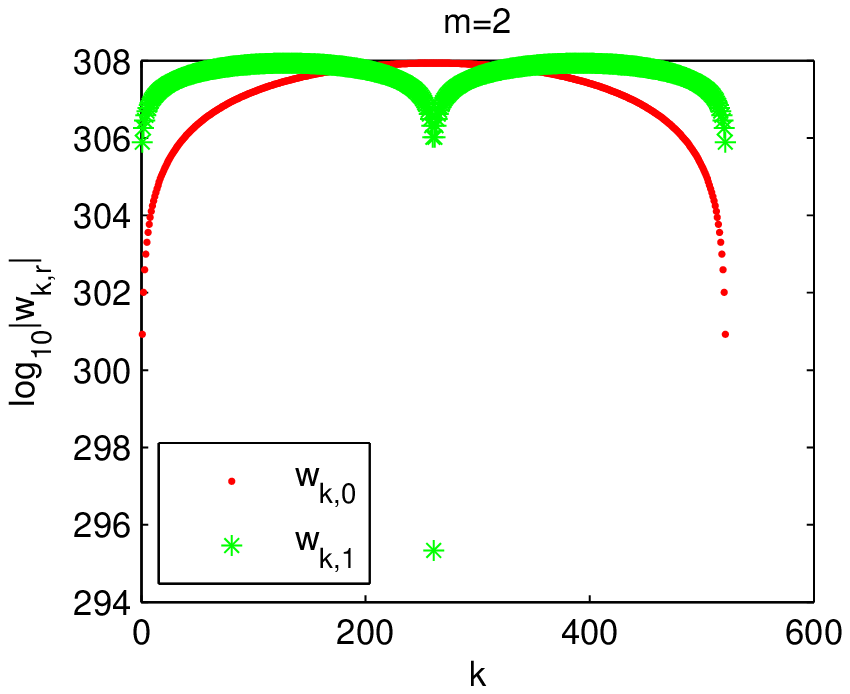} \includegraphics[width=1.8in]{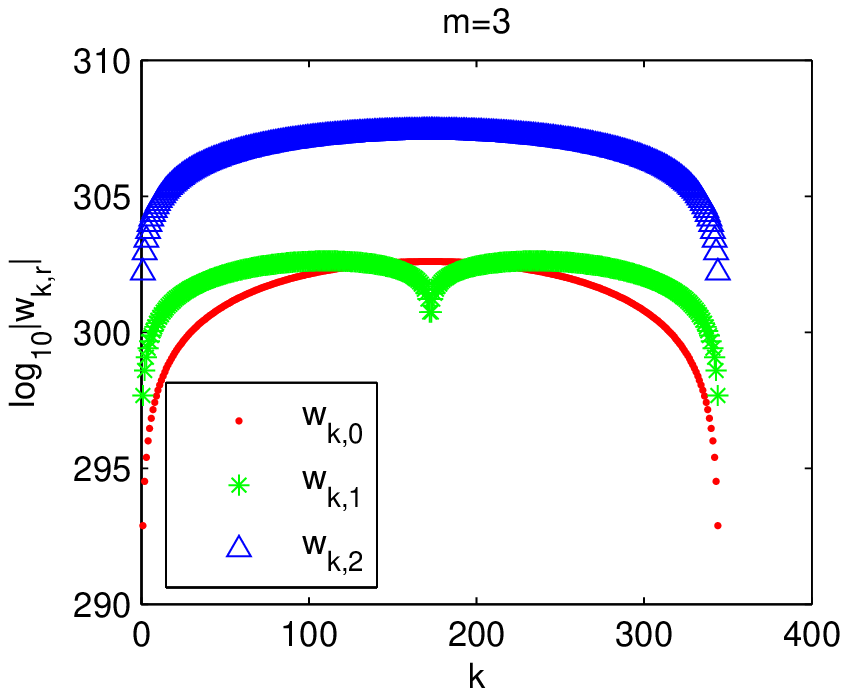}
  \includegraphics[width=1.8in]{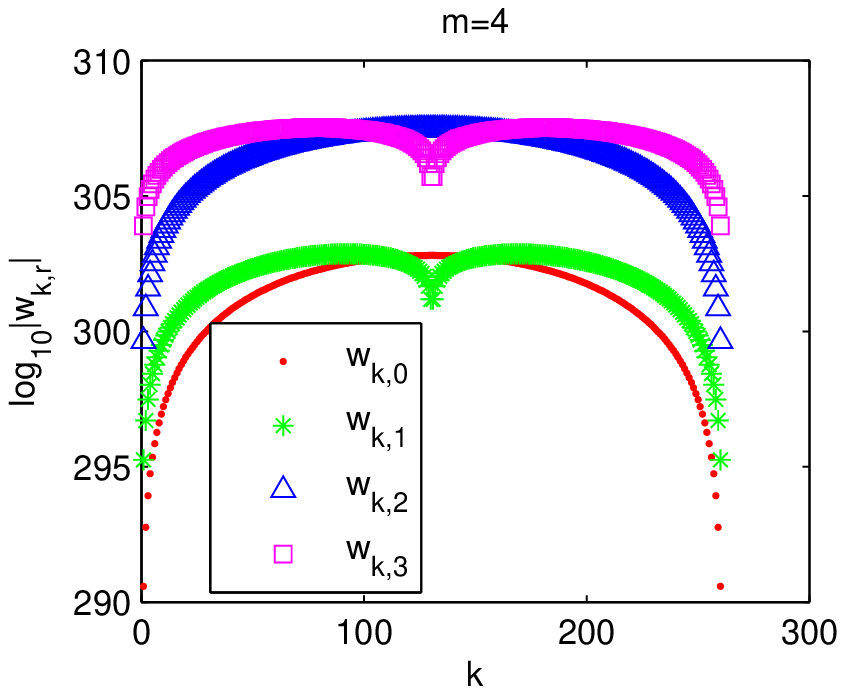}}
  \caption{Magnitudes of the barycentric weights $w_{k,r}$  by method of Sadiq and Viswanath interpolating at Legendre pointsystem.}
\end{figure}

\begin{table}[!ht]
\tabcolsep 0pt \caption{The threshold $S$ that the algorithm \cite{Sadiq2013} collapses for computation of the barycentric weights if $n\ge S$ with different $m$ at the Chebyshev pointsystem (1.3) and Legendre pointsystem} \vspace*{-6pt}
\begin{center}{\small
\def\temptablewidth{0.90\textwidth}
{\rule{\temptablewidth}{1pt}}
\begin{tabular*}{\temptablewidth}{@{\extracolsep{\fill}}ccc|ccc}
 &{\rm Chebyshev pointsystem (1.3)}&     &    &{\rm Legendre pointsystem}&\\  \hline

     $m=2$              & $m=3$            &$m=4$                & $m=2$              & $m=3$            &$m=4$               \\   \hline
  S=524& S=347 & S=263 &  S=523 &   S=346  &  S=262   \\

       \end{tabular*}
       {\rule{\temptablewidth}{1pt}}}
       \end{center}
       \end{table}

Thus, in \cite{Sadiq2013}, Sadiq and Viswanath used $2\cos((2k-1)\pi/(2n))$ instead of $\cos((2k-1)\pi/(2n))$ and considered Leja reordering of the points to get more stable computation of the weights and decrease the chance of overflows or underflows. However, reordering of the points  does not change the magnitudes of the barycentric weights. Furthermore, Leja reordering needs $O(n^2)$ operations \cite{Calvetti}.

Fortunately, from the second barycentric form (1.11), we see that the weights $w_{k,r}$  appear in the denominator exactly as in the
numerator. Due to the division,
the barycentric weights can be simplified by cancelling the common factors without altering
the result (see Figures 2.1-2.4 and Tables 2.1-2.4 below).

In this paper, we are concerned with fast implementation of the higher order Hermite-Fej\'{e}r interpolation polynomial (1.8), based on the second barycentric form (1.11), at Gauss-Jacobi or Jacobi-Gauss-Lobatto pointsystems.

Recently, a new algorithm on the evaluation of the nodes and weights for the Gauss quadrature was
given by Glaser, Liu and Rokhlin \cite{Glaser} with $O(n)$ operations, which has been extended by both Bogaert, Michiels and Fostier
\cite{Bogaert}, and Hale and Townsend \cite{HaleTownsend}. A {\sc Matlab} routine for computation of these nodes and weights can be found in {\sc Chebfun}
system \cite{Chebfun}.

As a result of these
developments,  in Section 2, we will discuss in details on calculation of the second barycentric weights of the higher order Hermite-Fej\'{e}r interpolation polynomial (1.11) at Gauss-Jacobi or Jacobi-Gauss-Lobatto pointsystems, and present two algorithms with $O(nm^2)$ operations. Particularly, due to division, the common factor in the barycentric weights can be canceled, which yields a superiorly stable method for computing the simplified barycentric
weights. In Section 3,
we will consider the stabilities of these implementations for the second barycentric formula on higher order Hermite-Fej\'{e}r interpolation and present numerical examples illustrating the efficiency and accuracy. A final remark on the convergence rate of Hermite-Fej\'{e}r interpolation (1.5) is included in Section 4.

All the numerical results in this paper are carried out by using {\sc Matlab} R2012a on a  desktop (2.8 GB RAM, 2 Core2 (32 bit)
processors at 2.80 GHz) with Windows XP operating system.


\vspace{0.1cm}
\section{Fast computation of the barycentric weights on higher order Hermite-Fej\'{e}r interpolation}

In this section, we will introduce two methods for fast computation of the barycentric weights on higher order Hermite-Fej\'{e}r interpolation at Gauss-Jacobi or Jacobi-Gauss-Lobatto pointsystems, which both share $O(nm^2)$ operations and lead to fast and stable calculation of the barycentric weights due to the division in (1.11), where the exponentially increasing common factor is cancelled.

$\mathbf{Algorithm~ 1}$: Following the barycentric Hermite interpolation formula in \cite{Butcher2011,Sadiq2013,Schneider}, we rewrite the higher order Hermite-Fej\'{e}r interpolation (1.8) as
\begin{equation}\label{bary form}\begin{array}{lll}
  H_{mn-1}(f,x) &=&{\displaystyle H^{*}_{mn}(x)\sum_{k=1}^n\frac{f_{k,m-1}}{(m-1)!}\left(\frac{w_{k,0}}{x-x_k}\right)+\frac{f_{k,m-2}}{(m-2)!}\left(\frac{w_{k,0}}{(x-x_k)^2}+\frac{w_{k,1}}{x-x_k}\right) }\\
  &&+\cdots+f_{k,0}\left(\frac{w_{k,0}}{(x-x_k)^m}+\cdots+\frac{w_{k,m-1}}{x-x_k}\right)
\end{array}\end{equation}
with
\begin{equation}\label{hermite condition}
  \frac{d^rH_{mn-1}(f,x)}{dx^r}\bigg|_{x=x_k}=f_{k,r} \mbox{\quad for $r=0,\ldots,m-1$ and $k=1,\cdots,n$,}
\end{equation}
where $H_{mn}^{*}(x)=\prod_{k=1}^n(x-x_k)^m$ and $f_{k,j}=f^{(j)}(x_k)$. Denote by $\hbar_k(x)=\frac{\omega_n(x)}{x-x_k}=\omega_n'(x_k)\ell_k(x)$,
then, expression (\ref{bary form}) can be represented as
\begin{equation}{\small \begin{array}{lll}
H_{mn-1}(f,x) &=&{\displaystyle  \sum_{k=1}^nf_{k,0}\left(w_{k,0}\hbar^m_k(x)+w_{k,1}\hbar_k^m(x)(x-x_k)+\cdots+w_{k,m-1}\hbar^m_k(x)(x-x_k)^{m-1}\right)}\\
&& +\cdots+\frac{f_{k,m-1}}{(m-1)!}w_{k,0}\hbar^m_k(x)(x-x_k)^{m-1}.
\end{array}}\end{equation}
Furthermore, from (1.9) and (2.1)-(2.3), we have
\begin{equation}{\small\begin{array}{l}
  w_{k,0}\hbar_k^m(x)\bigg|_{x=x_k}=1, \\
  w_{k,0}(\hbar_k^m(x))^{(j)}+w_{k,1}(\hbar_k^m(x)(x-x_k))^{(j)}+\cdots+w_{k,j}(\hbar_k^m(x)(x-x_k)^j)^{(j)}\bigg|_{x=x_k}=0
\end{array}}\end{equation}
for $j=1,\ldots,m-1$.
By using the Taylor expansion of $\hbar_k^m(x)$ at $x_k$, it leads to $$\frac{\left(\hbar_k^m(x)(x-x_k)^i\right)^{(j)}}{j!}\bigg|_{x=x_k}=\frac{(\hbar_k^m(x))^{(j-i)}}{(j-i)!}\bigg|_{x=x_k},~~~i=0,1,\ldots,j-1.$$
Thus, from (2.4), we get the following formulas
\begin{equation}
\quad\,\,\,\, {\small \mbox{${\displaystyle w_{k,0}=\frac{1}{(\omega'_n(x_k))^m},\quad  w_{k,j}=-\sum_{i=0}^{j-1}\frac{w_{k,i}}{(j-i)!}\bigg(\ell_k^m(x)\bigg)^{(j-i)}\Bigg|_{x=x_k},~j=1,2,\ldots,m-1.}$}}
\end{equation}

In addition, from the definition of $\ell_k(x)=\frac{\omega_n(x)}{\omega'_n(x_k)(x-x_k)}$, it is not difficult to deduce by applying the Taylor expansion of $\omega_n(x)$ at $x=x_k$ that
\begin{equation} \frac{\left(\ell_k(x)\right)^{(r)}\big|_{x=x_k}}{r!}=\frac{\omega_n^{(r+1)}(x_k)}{(r+1)!\omega'_n(x_k)}:=M_{k,r}, \quad r=0,\cdots,m-1. \end{equation}
Set $b_{k,j}=\frac{\big[\ell_k^m(x)\big]_{x=x_k}^{(j)}}{j!}$ $(j=0,1,\ldots,m-1)$ and $a_{k,i}=\frac{m}{(i-1)!}\big[\frac{\ell'_k(x)}{\ell_k(x)}\big]_{x=x_k}^{(i-1)}$ ($i=1,2,\ldots,m-1$). Noting that $b_{k,j}=\frac{m}{j!}\big[\ell_k^{m-1}(x)\ell'_k(x)\big]_{x=x_k}^{(j-1)}=\frac{m}{j!}\big[\ell_k^{m}(x)\frac{\ell'_k(x)}{\ell_k(x)}\big]_{x=x_k}^{(j-1)}$, it follows by Leibniz formula that
\begin{equation}
  b_{k,j} = \frac{1}{j}\sum_{i=1}^ja_{k,i}b_{k,j-i},\quad j=1,2,\ldots,m-1; \quad b_{k,0}=1.
\end{equation}

In the following, we shall show that $a_{k,i}$ can be computed from $M_{k,r}$, then $b_{k,j}$ can be evaluated from the recursion (2.7). In fact, $a_{k,i}$ can be calculated from the coefficient of the Taylor expansion of $\ell'_k(x)/\ell_k(x)$ at $x=x_k$ by the following lemma.

\begin{lemma}
  Let $F(x)=\frac{A(x)}{B(x)}=\sum_{j=1}^{s}F_j(x-x_k)^{j-1}+O((x-x_k)^{s})$, where $A(x)=\sum_{j=1}^{s}A_j(x-x_k)^{j-1}+O((x-x_k)^{s})$ and $B(x)=\sum_{j=1}^{s}B_j(x-x_k)^{j-1}+O((x-x_k)^{s}) $, then it obtains
  \begin{equation}
   B_1F_j=A_j-\sum_{i=2}^{j}B_iF_{j-i+1},~~~j=1,\cdots,s.
  \end{equation}
\end{lemma}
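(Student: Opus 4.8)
The plan is to exploit the defining relation $A(x)=F(x)B(x)$, which holds because $F=A/B$ (with $B_1\neq 0$ so that the ratio is regular at $x_k$), and to match Taylor coefficients on the two sides at $x_k$. Since the three series agree with the stated expansions modulo $O((x-x_k)^{s})$, the coefficient of $(x-x_k)^{j-1}$ coming from the remainder terms only affects orders $\ge s$; hence it suffices to equate the coefficient of $(x-x_k)^{j-1}$ on each side for $j=1,\ldots,s$, and convergence is never at issue because we work with formal truncations.

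First I would form the Cauchy product of the truncated series for $F$ and $B$. Writing $F(x)=\sum_p F_p(x-x_k)^{p-1}$ and $B(x)=\sum_q B_q(x-x_k)^{q-1}$, the product of two generic terms is $F_pB_q(x-x_k)^{p+q-2}$, so the coefficient of $(x-x_k)^{j-1}$ in $F(x)B(x)$ collects exactly the pairs with $p+q=j+1$ and $p,q\ge 1$. Equating this with the coefficient $A_j$ of $A(x)$ yields the convolution identity
\[
A_j=\sum_{i=1}^{j}F_iB_{j-i+1},\qquad j=1,\ldots,s.
\]

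Next I would isolate the single term containing $F_j$. The summand with $i=j$ is $F_jB_1$, and reindexing the remaining terms $i=1,\ldots,j-1$ by $i'=j-i+1$ (so that $i'$ ranges over $2,\ldots,j$) turns the leftover sum into $\sum_{i=2}^{j}B_iF_{j-i+1}$. Transposing it gives precisely
\[
B_1F_j=A_j-\sum_{i=2}^{j}B_iF_{j-i+1},
\]
which is (2.9). For $j=1$ the sum is empty and the identity reduces to $B_1F_1=A_1$, consistent with $F_1=A_1/B_1$.

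The argument is entirely routine and carries no genuine analytic obstacle. The only place demanding care is the index bookkeeping in the convolution: getting the constraint $p+q=j+1$ right rather than $p+q=j$, and performing the substitution $i'=j-i+1$ cleanly so that the extracted coefficient is $B_1F_j$ and not $B_jF_1$. I would verify these by checking the cases $j=1,2$ by hand, where $B_1F_2=A_2-B_2F_1$ follows directly.
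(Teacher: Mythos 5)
Your proof is correct: cross-multiplying to $A(x)=F(x)B(x)$, forming the Cauchy product, matching the coefficient of $(x-x_k)^{j-1}$ via $p+q=j+1$, and isolating the $i=j$ term $F_jB_1$ gives exactly the stated recursion, and your index bookkeeping checks out. The paper dismisses this lemma with ``The proof is trivial,'' and your argument is precisely the standard coefficient-comparison it is alluding to, so there is nothing to reconcile.
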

\begin{proof}
  The proof is trivial.
\end{proof}

Since
\begin{equation}
 \ell_k(x) = \sum_{r=1}^{m-1}M_{k,r-1}(x-x_k)^{r-1}+O((x-x_k)^{m-1}),
\end{equation}
and
\begin{equation}
 \ell'_k(x) = \sum_{r=1}^{m-1}rM_{k,r}(x-x_k)^{r-1}+O((x-x_k)^{m-1}),
\end{equation}
by Lemma 2.1 and using $M_{k,0}=1$ and $\frac{\ell_k'(x)}{\ell_k(x)}=\frac{1}{m}\sum_{r=1}^{m-1}a_{k,r}(x-x_k)^{r-1}+O((x-x_k)^{m-1})$, we have
  \begin{equation}
a_{k,1}=mM_{k,1}, ~~~a_{k,i}=imM_{k,i}-\sum_{j=2}^{i}M_{k,j-1}a_{k,i-j+1},~~~i=2,\ldots,m-1,
  \end{equation}
and then by (2.5) we get
\begin{equation}
\quad\,\,\,\, w_{k,0}=\frac{1}{(\omega'(x_k))^m},\quad  w_{k,j}=-\sum_{i=0}^{j-1}w_{k,i}b_{k,j-i},~j=1,\ldots,m-1.
\end{equation}
Thus, if $\{M_{k,r}\}_{r=0}^{m-1}$ and $w_{k,0}$ are known, from (2.11) the total computation of $\{a_{k,j}\}_{j=0}^{m-1}$ costs $O(m^2)$ operations, the same as those for $\{b_{k,j}\}_{j=0}^{m-1}$ and $\{w_{k,j}\}_{j=0}^{m-1}$ by (2.7) and (2.12), respectively, and then the implementation of (1.11) with $n_1=\cdots=n_n=m$ costs $O(nm^2)$ operations.

\vspace{0.36cm}
$\mathbf{Algorithm~ 2}$: The barycentric weights $w_{k,r}$ of the Hermite-Fej\'{e}r interpolation (2.1)  can also be calculated by another fast way based on the formula given in Szabados \cite{Sza1993},
\begin{equation}\label{HermiteFejerHigh}
H_{mn-1}(f,x)=\sum_{k=1}^n\sum_{j=0}^{m-1}f^{(j)}(x_k)\frac{\ell_k(x)^m}{j!}\sum_{i=0}^{m-j-1}\frac{[\ell_k(x)^{-m}]^{(i)}_{x=x_k}}{i!}(x-x_k)^{i+j}.
\end{equation}

We rewrite the interpolation (2.13) in the first barycentric interpolation form
 \begin{equation}\quad\quad
{\small \mbox{${\displaystyle H_{mn-1}(f,x)=H^*_{mn}(x)\sum_{k=1}^n\frac{1}{(\omega'_n(x_k))^m}\sum_{j=0}^{m-1}\frac{f_{k,j}}{j!}\sum_{i=0}^{m-j-1}
\frac{\big[\ell_k(x)^{-m}\big]^{(i)}_{x=x_k}}{i!}(x-x_k)^{i+j-m},}$}}
\end{equation}
and  second barycentric interpolation form
 \begin{equation}\quad\quad
{\small \mbox{${\displaystyle H_{mn-1}(f,x)=\frac{\sum_{k=1}^nw_{k,0}\sum_{j=0}^{m-1}\frac{f_{k,j}}{j!}\sum_{i=0}^{m-j-1}
\frac{\big[\ell_k(x)^{-m}\big]^{(i)}_{x=x_k}}{i!}(x-x_k)^{i+j-m}}{\sum_{k=1}^n
w_{k,0}\sum_{i=0}^{m-1}\frac{\big[\ell_k(x)^{-m}\big]^{(i)}_{x=x_k}}{i!}(x-x_k)^{i-m}}}$}}
\end{equation}
respectively. Next we concentrate on the computation of $\tilde{b}_{k,j}: = \frac{\big[\ell_k(x)^{-m}\big]^{(j)}_{x=x_k}}{j!}$ $(j=0,1,\ldots,m-1)$. Comparing (2.14) with (2.1), we find that the barycentric weights satisfies
\begin{equation} w_{k,j}=\tilde{b}_{k,j}w_{k,0},\quad\quad j=0,1,\ldots,m-1. \end{equation}

From \cite{Sza1993}, it follows
\begin{equation}
 \tilde{b}_{k,j}=\frac{1}{j}\sum_{i=1}^j\tilde{a}_{k,i}\tilde{b}_{k,j-i},\quad j=1,2,\ldots,m-1;\quad \tilde{b}_{k,0}=1,
\end{equation}
where $\tilde{a}_{k,i}=\frac{m}{(i-1)!}\big[\frac{1}{x-x_k}-\frac{\omega'_n(x)}{\omega_n(x)}\big]^{(i-1)}_{x=x_k}$ ($i=1,\cdots,m-1$). Then from $\omega_n(x)=\omega'_n(x_k)\ell_k(x)(x-x_k)$, it is easy to see that $\tilde{a}_{k,i}=\frac{m}{(i-1)!}\big[\frac{\ell_k(x)\omega'_n(x_k)-\omega'_n(x)}{\omega_n(x)}\big]^{(i-1)}_{x=x_k}=\frac{m}{(i-1)!}\big[\frac{-\ell'_k(x)}{\ell_k(x)}\big]^{(i-1)}_{x=x_k}$.
Similarly, by Lemma 2.1 and (2.9)-(2.10), we get
\begin{equation}
   \tilde{a}_{k,j}=-mM_{k,1},~~~\tilde{a}_{k,j}=-jmM_{k,j}-\sum_{i=2}^{j}M_{k,j-1}\tilde{a}_{k,j-i+1},~ j=2,\ldots,m-1.
\end{equation}
Thus, from (2.18) and (2.17), the total computation of barycentric weights $\{\widetilde{b}_{k,r}\}_{r=0}^{m-1}$ costs also $O(m^2)$ operations if $\{M_{k,r}\}_{r=0}^{m-1}$ and $w_{k,0}$ are known.

From the above illustrations, we see that fast computation of $M_{k,r}$ and $w_{k,0}$ leads to fast implementation of higher order barycentric Hermite-Fejer interpolation. We shall show that for each $k$, $\{M_{k,r}\}_{r=0}^{m-1}$ can be rapidly calculated with $O(m)$ operations for Gauss-Jacobi or Jacobi-Gauss-Lobatto pointsystems.


\subsection{Gauss-Jacobi pointsystems}
Let $\{x_k\}_{k=1}^n$ be the zeros of the Jacobi polynomial $P_n^{(\alpha,\beta)}(x)$. Thus $\omega_n(x)=\frac{P_n^{(\alpha,\beta)}(x)}{K_n}$, where $K_n$ is the leading coefficient of $P_n^{(\alpha,\beta)}(x)$. From (2.6), we get $$M_{k,r-1}=\frac{\omega_n^{(r)}(x_k)}{r!\omega_n'(x_k)}=\frac{\frac{d^rP_n^{(\alpha,\beta)}}{dx^r}(x_k)}{r!P{'}_n^{(\alpha,\beta)}(x_k)}, ~~ r=1,\ldots,m.$$
It is known that $P_n^{(\alpha,\beta)}(x)$ is the unique solution of the second order linear homogeneous Sturm-Liouville differential equation
\begin{equation}
  (1-x^2)y''+(\beta-\alpha-(\alpha+\beta+2)x)y'+n(n+\alpha+\beta+1)y=0,
\end{equation}
from which it is not difficult to deduce that
\begin{equation}\label{jacobi_diff}\quad\begin{array}{l}
(1-x^2)y^{(r+2)}+[\beta-\alpha-(\alpha+\beta+2(r+1))x]y^{(r+1)}+[n(n+\alpha+\beta+1)\\
\hspace{6cm} -r(\alpha+\beta+r+1)]y^{(r)}=0,\end{array}
\end{equation}
for $r=0,1,\ldots$. Thus, we have
\begin{equation}\quad
  \mbox{$M_{k,r+1}=\frac{(\alpha+\beta+2(r+1))x_k+\alpha-\beta}{1-x_k^2}\frac{1}{(r+2)}M_{k,r}+\frac{r(\alpha+\beta+r+1)-n(n+\alpha+\beta+1)}{1-x_k^2}\frac{1}{(r+2)(r+1)}M_{k,r-1}$},
\end{equation}
with $M_{k,0}=1$, $M_{k,1}=\frac{\alpha-\beta+(\alpha+\beta+2)x_k}{2(1-x_k^2)}$ and then $\{M_{k,r}\}_{r=0}^{m-1}$ can be computed in $O(m)$. Consequently, from (2.6)-(2.7), (2.11)-(2.12), (2.14)-(2.15) and (2.18), the barycentric form (1.11) with $n_1=\cdots=n_n=m$ and (2.15)  can be achieved in $O(nm^2)$ operations if $w_{k,0}$ is known.

From Wang et al. \cite{Wang2013}, $w_{k,0}$ has the explicit form
\begin{equation}
w_{k,0}=\left[C_n^{(\alpha,\beta)}(-1)^{k+1}\sqrt{(1-x_k^2)\overline{w}_k}\right]^m,\quad k=1,2,\ldots,n,
\end{equation}
where $\overline{w}_k$ is the Gaussian quadrature weight corresponding to $x_k$ for the Jacobi weight,
$$
C_n^{(\alpha,\beta)}=\sigma_n\frac{\Gamma(2n+\alpha+\beta+1)}{2^{n+\frac{\alpha+\beta+1}{2}}}\frac{1}{\sqrt{n!\Gamma(n+\alpha+\beta+1)\Gamma(n+\alpha+1)\Gamma(n+\beta+1)}}
$$
and $\sigma_n=+1$ for $n$ odd and  $\sigma=-1$ for $n$ even. Moreover, both $\{x_k\}_{k=1}^n$ and $\{\overline{w}_k\}_{k=1}^n$ can be efficiently calculated by routine \textbf{jacpts} in {\sc Chebfun} \cite{Chebfun} with $O(n)$ operations.

Additionally, it is worth noting that due to the division of the second barycentric form (1.11) or formula (2.15), the common factor $(C_n^{(\alpha,\beta)})^m$ of weights $w_{k,r}$ from (2.12) and (2.16) can be cancelled without affecting the value of $H_{mn-1}(f,x)$ (we call these new weights as simplified barycentric weights). Then the barycentric weight $w_{k,0}$ can be simplified as
\begin{equation}
w_{k,0}=(-1)^{m(k+1)}\left[\sqrt{(1-x_k^2)\overline{w}_k}\right]^m.
\end{equation}

Comparing these two algorithms with Sadiq and Viswanath's \cite{Sadiq2013}, we find that the new algorithms cost $O(nm^2)$ operations much less than that $O(n^2m+nm^2)$ given by Sadiq and Viswanath \cite{Sadiq2013} if $n\gg m$. Moreover, due to the cancellation of the common factor $(C_n^{(\alpha,\beta)})^m$, the computation of $\{w_{k,j}\}_{j=0}^{m-1}$ is quite efficient and stable (see Figures 2.1-2.2 and Tables 2.3-2.4). Tables 2.1-2.2 show the values of the common factor $\left(C_n^{(\alpha,\beta)}\right)^m$ with respect to Chebyshev pointsystem (1.3) and Gauss-Legendre pointsystem, respectively.

\begin{table}[!ht]
\tabcolsep 0pt \caption{$\left(C_n^{(\alpha,\beta)}\right)^m$ with respect to Chebyshev pointsystem (1.3): $\alpha=\beta=-0.5$} \vspace*{-6pt}
\begin{center}{\small
\def\temptablewidth{.950\textwidth}
{\rule{\temptablewidth}{1pt}}
\begin{tabular*}{\temptablewidth}{@{\extracolsep{\fill}}ccccc}

 &    $m=2$              & $m=3$            &$m=4$        &$m=10$          \\    \hline
$n=100$ &$1.27876*10^{57}$  & $-4.57282*10^{85}$ & $1.63523*10^{114}$  &$3.41937*10^{285}$        \\

$n=200$ &$1.02744*10^{117}$ & $-3.29335*10^{175}$& $1.05564*10^{234}$  &$1.14496*10^{585}$   \\

$n=500$ &$1.70536*10^{297}$ & $-7.04245*10^{445}$& $2.90825*10^{594}$  &$1.44238*10^{1486}$\\

$n=1000$ & $9.13653*10^{597}$ & $-8.73318*10^{896}$ &  $8.34762*10^{1195}$  &$6.36660*10^{2989}$\\

       \end{tabular*}
       {\rule{\temptablewidth}{1pt}}}
       \end{center}
       \end{table}

       \begin{table}[!ht]
\tabcolsep 0pt \caption{$\left(C_n^{(\alpha,\beta)}\right)^m$ with respect to  Gauss-Legendre pointsystem: $\alpha=\beta=0$} \vspace*{-6pt}
\begin{center}{\small
\def\temptablewidth{.950\textwidth}
{\rule{\temptablewidth}{1pt}}
\begin{tabular*}{\temptablewidth}{@{\extracolsep{\fill}}ccccc}

 &    $m=2$              & $m=3$            &$m=4$        &$m=10$          \\    \hline
$n=100$ &$2.55114*10^{57}$  & $-1.28855*10^{86}$ & $6.50829*10^{114}$  &$1.08061*10^{287}$        \\

$n=200$ &$2.05232*10^{117}$ & $-9.29755*10^{175}$& $4.21203*10^{234}$  &$3.64106*10^{586}$\\

$n=500$ &$3.40901*10^{297}$ & $-1.99041*10^{446}$& $1.16214*10^{595}$  &$4.60408*10^{1487}$\\

$n=1000$ &$1.82685*10^{598}$ & $-2.46919*10^{897}$& $3.33738*10^{1196}$  &$2.03477*10^{2991}$
       \end{tabular*}
       {\rule{\temptablewidth}{1pt}}}
       \end{center}
       \end{table}

\begin{figure}[htbp]
\centerline{\includegraphics[width=1.8in]{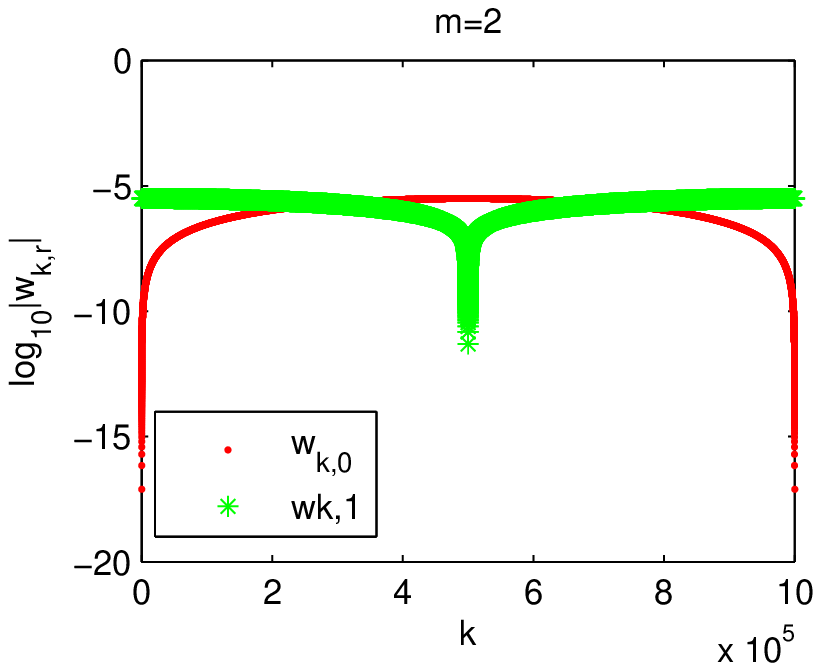} \includegraphics[width=1.8in]{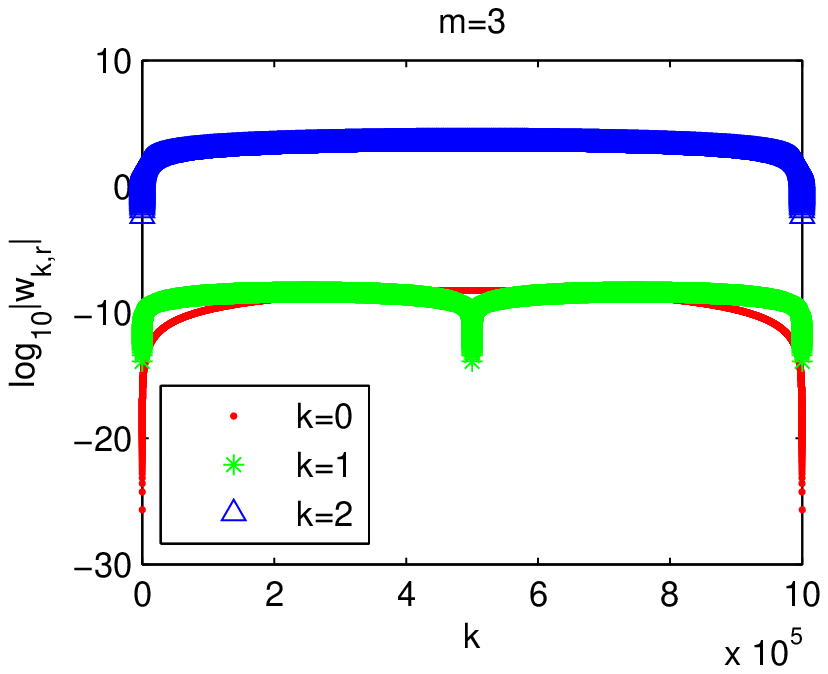}
  \includegraphics[width=1.8in]{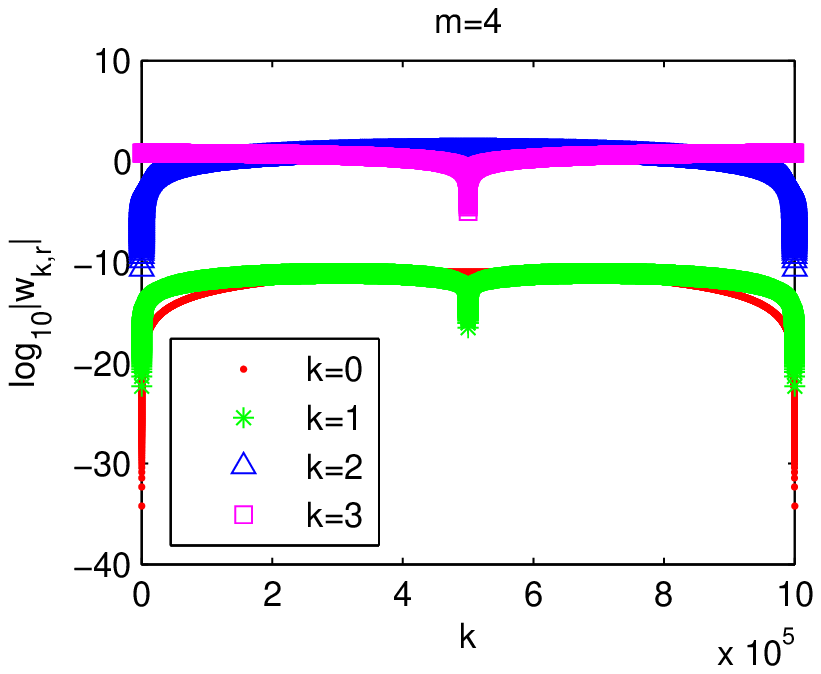}}
  \caption{Magnitude of the simplified barycentric weights $w_{k,r}$  by the $\mathbf{Algorithm~ 2}$ interpolating at the Chebyshev pointsystem (1.3): $k=1:10^6$ and $r=0:m-1$.}\label{modified chebyshev points}
\end{figure}

\begin{figure}[htbp]
\centerline{\includegraphics[width=1.8in]{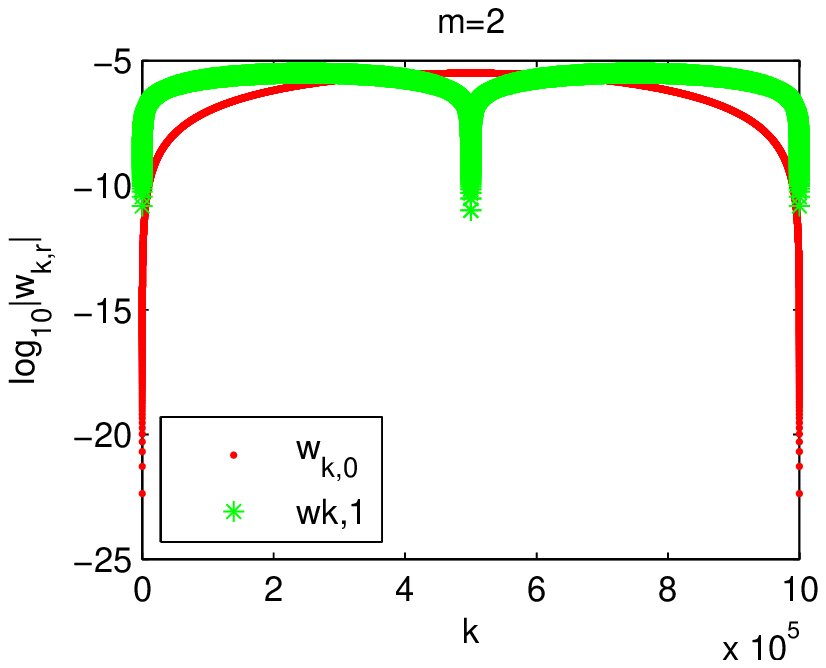} \includegraphics[width=1.8in]{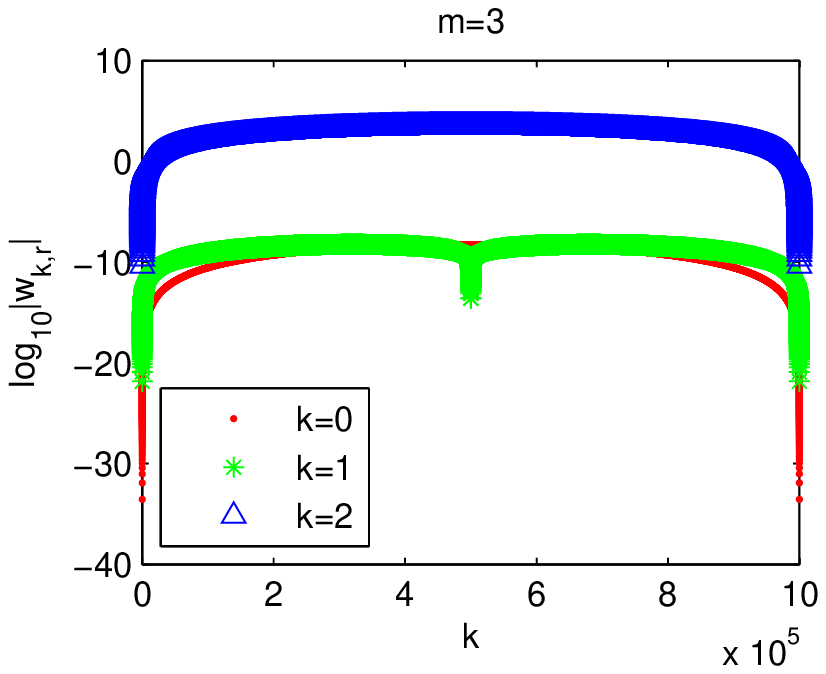}
  \includegraphics[width=1.8in]{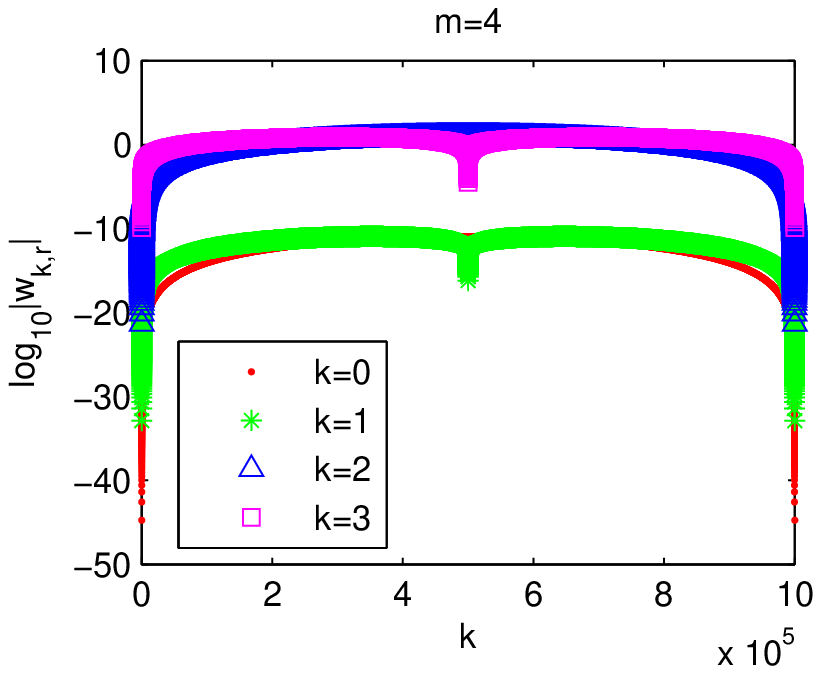}}
  \caption{Magnitude of the simplified barycentric weights $w_{k,r}$  by the $\mathbf{Algorithm~ 2}$ interpolating at Legendre pointsystem: $k=1:10^6$ and $r=0:m-1$.}\label{modified legendre points}
\end{figure}

\begin{table}[!ht]
\tabcolsep 0pt \caption{The CPU time for computation of the simplified barycentric weights $w_{k,r}$  by the $\mathbf{Algorithm~ 2}$ at the Chebyshev pointsystem (1.3): $k=1:n$ and $r=0:m-1$.} \vspace*{-6pt}
\begin{center}{\small
\def\temptablewidth{0.75\textwidth}
{\rule{\temptablewidth}{1pt}}
\begin{tabular*}{\temptablewidth}{@{\extracolsep{\fill}}ccccc}
     &$m=2$              & $m=3$            &$m=4$                &$m=10$\\   \hline
$n=10^4$  &0.011127s   &0.012392s  &  0.012778s  &  0.026214s\\

$n=10^5$  &  0.031632s   &  0.046582s   &   0.061328s   & 0.271533s    \\

$n=10^6$ & 0.239484s   & 0.377926s    &  0.530127s      & 2.813712s  \\
       \end{tabular*}
       {\rule{\temptablewidth}{1pt}}}
       \end{center}
       \end{table}

\begin{table}[!ht]
\tabcolsep 0pt \caption{The CPU time for computation of the simplified  barycentric weights $w_{k,r}$  by the $\mathbf{Algorithm~ 2}$ at the Legendre pointsystem: $k=1:n$ and $r=0:m-1$.} \vspace*{-6pt}
\begin{center}{\small
\def\temptablewidth{0.75\textwidth}
{\rule{\temptablewidth}{1pt}}
\begin{tabular*}{\temptablewidth}{@{\extracolsep{\fill}}ccccc}
   &  $m=2$              & $m=3$            &$m=4$                &$m=10$\\   \hline

$n=10^4$  &  0.166094   &   0.169577s     & 0.170217s& 0.182489s\\

$n=10^5$& 1.713747s   &1.712082s   &1.734930s   &   1.970211s

\\

$n=10^6$&17.445534s   &  17.745105s    &  18.010896s      & 20.098777s  \\
      \end{tabular*}
       {\rule{\temptablewidth}{1pt}}}
       \end{center}
       \end{table}

\newtheorem{remark}[theorem]{Remark}
%
%
%
%

\begin{remark}
The simplified barycentric weights (2.23), in the case $m=1$ ($w_{k,0}=(-1)^{k+1}\sqrt{(1-x_k^2)\overline{w}_k}$), is exact the barycentric weight for the barycentric formula of Lagrange interpolation at the Jacobi pointsystem $\{x_k\}_{k=1}^n$,  which was derived for Chebyshev points of second kind in Henrici \cite{Henricibook1982}, for Legendre points in Wang and Xiang \cite{WangXiang2012}, and extended to Jacobi points in Hale and Trefethen \cite{Hale2012}.
\end{remark}

\subsection{Jacobi-Gauss-Lobatto pointsystems}

Suppose $-1=x_1<x_2<\cdots<x_{n-1}<x_{n}=1$ are the $n$ zeros of $(x^2-1)P_{n-2}^{(\alpha,\beta)}(x)$. Thus from (2.6) we get
\begin{equation}\quad\,\, {\small
  \mbox{$M_{r-1}(x)=\frac{\omega_n^{(r)}(x)}{r!\omega_n'(x)}=\frac{(x^2-1)\left(P_{n-2}^{(\alpha,\beta)}\right)^{(r)}(x)+2rx\left(P_{n-2}^{(\alpha,\beta)}\right)^{(r-1)}(x)
  +r(r-1)\left(P_{{n-2}}^{(\alpha,\beta)}\right)^{(r-2)}(x)}{r!\left((x^2-1)\left(P_{{n-2}}^{(\alpha,\beta)}\right)'(x)+2xP_{{n-2}}^{(\alpha,\beta)}(x)\right)},
$}}\end{equation}
for $r=2\ldots,m$. Especially, $M_{k,0}=M_0(x_k)=1$ for $k=1,2,\ldots,n$.

For the case $x_k=\pm 1$, it follows
\begin{equation}
\quad\,\, {\small
  \mbox{$  M_{1,r-1}=M_{r-1}(-1)=\frac{\left(P_{n-2}^{(\alpha,\beta)}\right)^{(r-1)}(-1)}{(r-1)!P_{n-2}^{(\alpha,\beta)}(-1)}-\frac{1}{2}\frac{\left(P_{n-2}^{(\alpha,\beta)}\right)^{(r-2)}(-1)}
  {(r-2)!P_{n-2}^{(\alpha,\beta)}(-1)}, ~~ r=2,\ldots,m,$}}
\end{equation}
\begin{equation}
\quad\,\, {\small
  \mbox{$  M_{n,r-1}=M_{r-1}(1)=\frac{\left(P_{n-2}^{(\alpha,\beta)}\right)^{(r-1)}(1)}{(r-1)!P_{n-2}^{(\alpha,\beta)}(1)}+\frac{1}{2}\frac{\left(P_{n-2}^{(\alpha,\beta)}\right)^{(r-2)}(1)}
  {(r-2)!P_{n-2}^{(\alpha,\beta)}(1)}, ~~ r=2,\ldots,m,$}}
\end{equation}
and for the case $x_k\not=\pm1$ and $r=2,\ldots,m$,
\begin{equation}\quad\quad{\small\mbox{$
  M_{k,r-1}=M_{r-1}(x_k)=\frac{\left(P_{n-2}^{(\alpha,\beta)}\right)^{(r)}(x_k)}{r!\left(P_{n-2}^{(\alpha,\beta)}\right)'(x_k)}
  +\frac{2x_k}{x_k^2-1}\frac{\left(P_{n-2}^{(\alpha,\beta)}\right)^{(r-1)}(x_k)}{(r-1)!\left(P_{n-2}^{(\alpha,\beta)}\right)'(x_k)}
  +\frac{1}{x_k^2-1}\frac{\left(P_{n-2}^{(\alpha,\beta)}\right)^{(r-2)}(x_k)}{(r-2)!\left(P_{n-2}^{(\alpha,\beta)}\right)'(x_k)}$}},
\end{equation}
where
$\frac{\left(P_{n-2}^{(\alpha,\beta)}\right)^{(r)}(x_k)}{r!\left(P_{n-2}^{(\alpha,\beta)}\right)'(x_k)}$ can be evaluated by (2.21) with $n-2$ instead of $n$ for $x_k\not=\pm1$, and for $x_k=\pm 1$ by
$$\frac{\left(P_{n-2}^{(\alpha,\beta)}\right)^{(r)}(\pm1)}{r!\left(P_{n-2}^{(\alpha,\beta)}\right)(\pm1)}
=\frac{(n-2)(\alpha+\beta+n-1)-(r-1)(\alpha+\beta+r)}{r\left(\pm(\alpha+\beta+2r)+\alpha-\beta\right)}
\frac{\left(P_{n-2}^{(\alpha,\beta)}\right)^{(r-1)}(\pm1)}{(r-1)!\left(P_{n-2}^{(\alpha,\beta)}\right)(\pm1)}$$
for $r=1,2,\ldots,m-1$.

Moreover, from \cite{Wang2013}, $w_{k,0}$ has the explicit form of
\begin{equation}
\quad \quad w_{k,0}=\left[C_{n-2}^{(\alpha,\beta)}(-1)^{k+1}\sqrt{\eta_k\widehat{w}_k}\right]^m,\eta_k=\left\{\begin{array}{ll}
\beta,&k=1,\\
\alpha,&k=n,\\
1,&\mbox{otherwise},\end{array}\right.
\end{equation}
where $\widehat{w}_k=\frac{\overline{w}_k}{1-x_k^2}$ for $k=2,3,\ldots,n-1$,
$$
\widehat{w}_1=2^{\alpha+\beta-1}\frac{\Gamma(\beta)\Gamma(\beta+1)\Gamma(n+\alpha+1)n!}{\Gamma(n+\beta+1)\Gamma(n+\alpha+\beta+1)},\, \widehat{w}_{n}=2^{\alpha+\beta-1}\frac{\Gamma(\alpha)\Gamma(\alpha+1)\Gamma(n+\beta+1)n!}{\Gamma(n+\alpha+1)\Gamma(n+\alpha+\beta+1)}$$
and  $\{\overline{w}_k\}_{k=2}^{n-1}$ is the Gaussian quadrature weight corresponding to $\{x_k\}_{k=2}^{n-1}$. Due to the division in (2.15), the barycentric weight $w_{k,0}$ can be simplified as
\begin{equation}
w_{k,0}=(-1)^{m(k+1)}\left(\eta_k\widehat{w}_k\right)^{m/2}.
\end{equation}

Figures 2.3-2.4 show magnitude of the simplified barycentric weights $w_{k,r}$ by the $\mathbf{Algorithm~ 2}$ interpolating at the Jacobi-Gauss-Lobatto pointsystem  with $\alpha=\beta=1.5$ and Legendre-Gauss-Lobatto pointsystem with $n=10^6$ and $m=2,3,4$ respectively.

\begin{figure}[htbp]
\centerline{\includegraphics[width=1.8in]{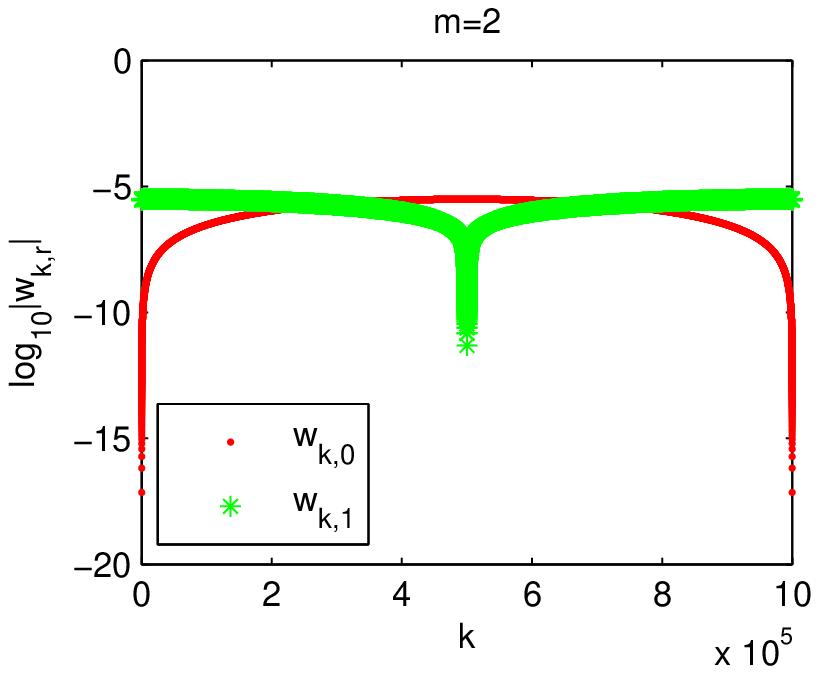} \includegraphics[width=1.8in]{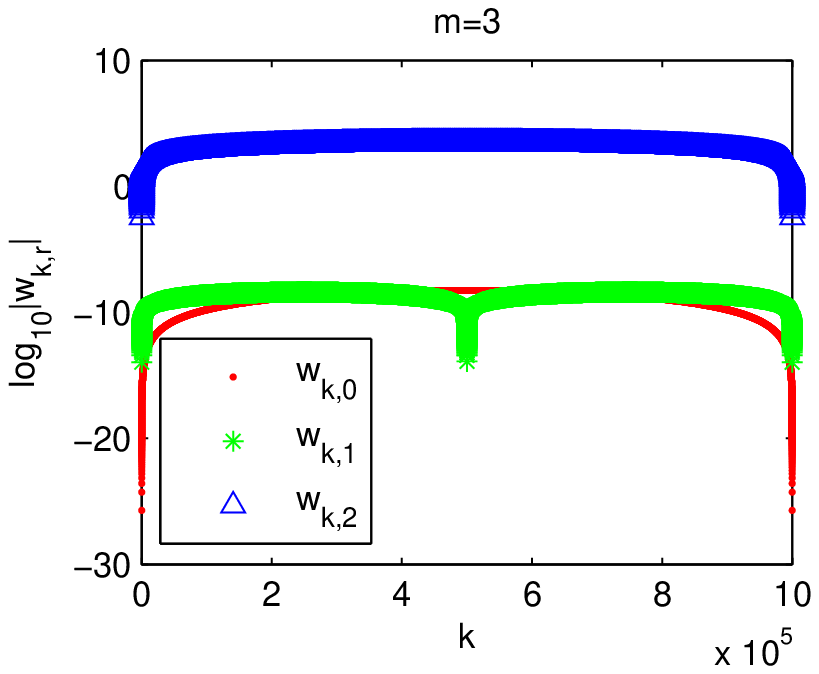}
  \includegraphics[width=1.8in]{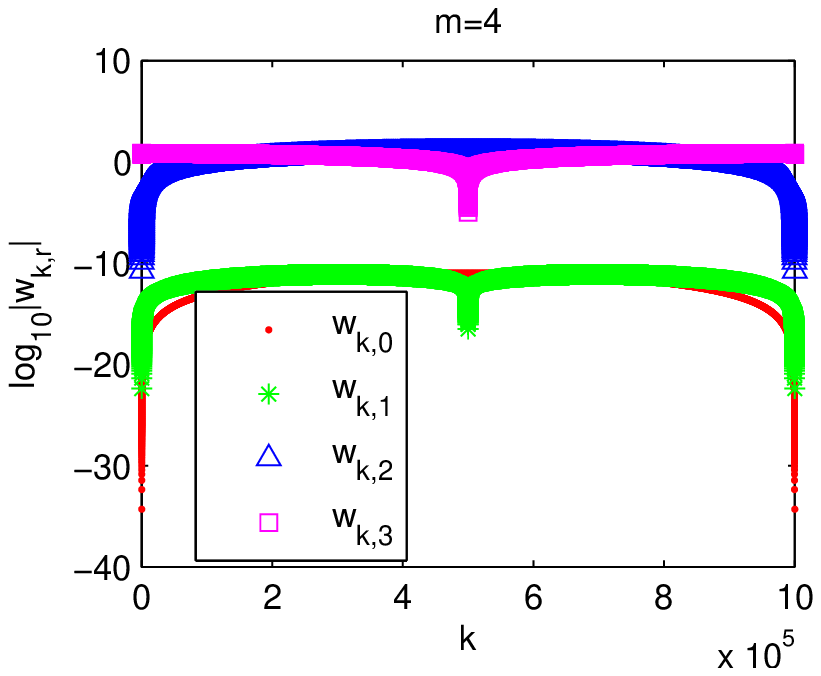}}
  \caption{Magnitude of the simplified barycentric weights $w_{k,r}$  by the $\mathbf{Algorithm~ 2}$ interpolating at the Jacobi-Gauss-Lobatto pointsystem  with $\alpha=\beta=1.5$: $k=1:10^6$ and $r=0:m-1$. }
\end{figure}

\begin{figure}[htbp]
\centerline{\includegraphics[width=1.8in]{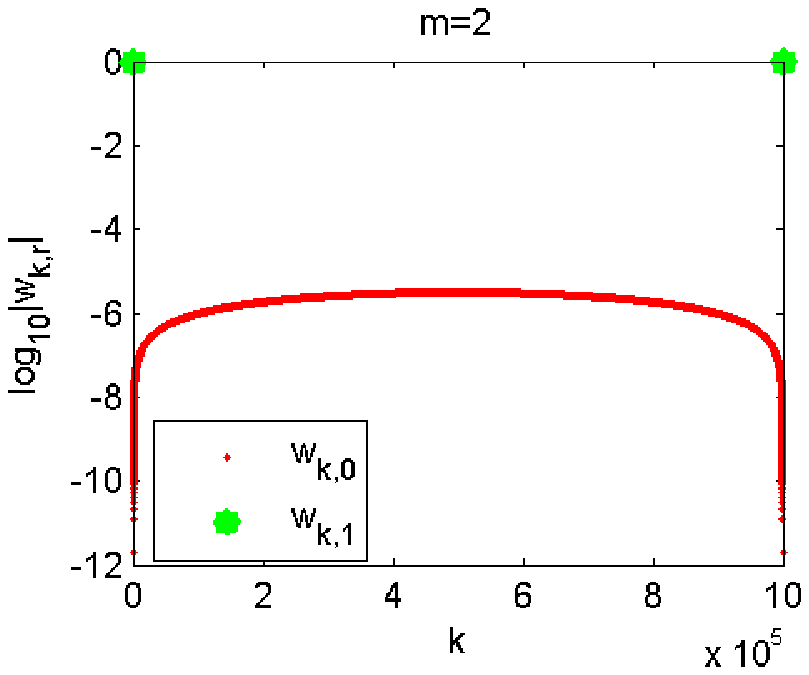} \includegraphics[width=1.8in]{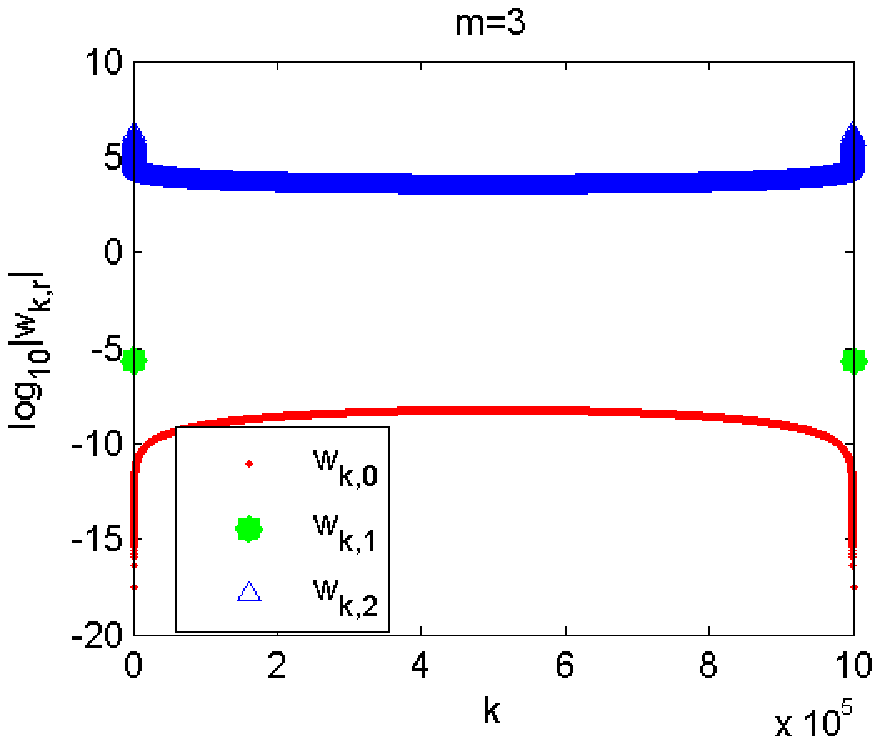}
  \includegraphics[height=1.42in,width=1.8in]{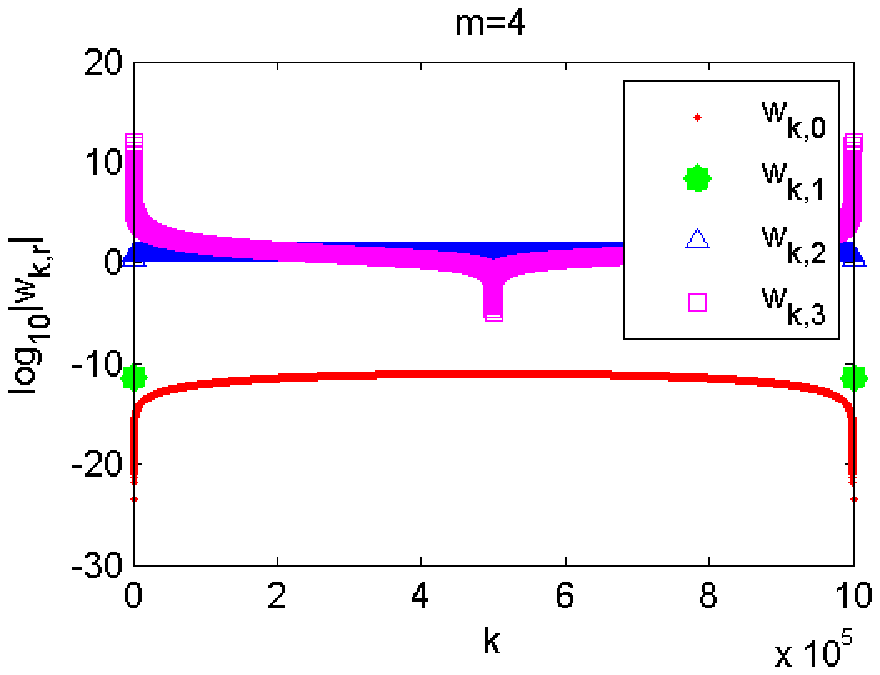}}
  \caption{Magnitude of the  simplified barycentric weights $w_{k,r}$  by the $\mathbf{Algorithm~ 2}$ interpolating at Legendre-Lobatto pointsystem: $k=1:10^6$ and $r=0:m-1$. $w_{k,1}=0$ except the first and the last term.}
\end{figure}

\begin{remark}
In the case $m=1$ and $\alpha=\beta=0.5$, the nodes $\{x_k\}_{k=1}^n$ are the Chebyshev points of second kinds
\begin{equation}
x_k=\cos\left(\frac{k-1}{n-1}\pi\right),\quad k=1,2,\ldots,n
\end{equation}
and $w_{k,0}=(-1)^{k+1}\eta_k$ with $\eta_k=\left\{\begin{array}{ll}\frac{1}{2},&k=1,n\\
 1,&{\rm otherwise}\end{array}\right.$ derived by Salzer \cite{Salzer}.
 \end{remark}

\begin{algorithm}[h]
\caption{Barycentric weights at Gauss-Jacobi or Jacobi-Gauss-Lobatto pointsystems}
\begin{algorithmic}[1]
\STATE Input parameters $n$, $m$, $\alpha$, $\beta$.
\STATE Compute the nodes $\{x_k\}_{k=1}^n$ and simplified barycentric weights $\{w_{k,0}\}_{k=1}^n$ by $\mathbf{jacpts}$.
\STATE Compute $M_{k,r}=\frac{\omega^{(r+1)}_n(x_k)}{(r+1)!\omega'_n(x_k)}$ $(k=1,\cdots,n,~r=1,\cdots,m-1)$ by recursion.
\STATE Compute $a_{k,i} = imM_{k,i}-\sum_{j=1}^{i-1}a_{k,j}M_{k,i-j}$ $(k=1,\cdots,n,~i=1,\cdots,m-1)$.
\STATE Compute $b_{k,i} = \frac{1}{i}\sum_{v=1}^{i}a_{k,v}b_{k,i-v}$ $(k=1,\cdots,n,~i=1,\cdots,m-1)$ with $b_{k,0}=1$.
\STATE Let $c_{k,0} = 1$, compute $c_{k,i} = -\sum_{j=1}^{i-1}c_{k,j}b_{k,i-j}$ $(k=1,\cdots,n,~i=1,\cdots,m-1)$.
\STATE Return $w_{k,i}=w_{k,0}c_{k,i}$ $(k=1,\cdots,n,~i=0,\cdots,m-1)$.
\end{algorithmic}
\end{algorithm}

\begin{algorithm}[h]
\caption{Barycentric weights at Gauss-Jacobi or Jacobi-Gauss-Lobatto pointsystems}
\begin{algorithmic}[1]
\STATE Input parameters $n$, $m$, $\alpha$, $\beta$.
\STATE Compute the nodes $\{x_k\}_{k=1}^n$ and simplified barycentric weights $\{w_{k,0}\}_{k=1}^n$ by $\mathbf{jacpts}$.
\STATE Compute $M_{k,r}=\frac{\omega^{(r+1)}_n(x_k)}{(r+1)!\omega'_n(x_k)}$ $(k=1,\cdots,n,~r=1,\cdots,m)$ by recursion.
\STATE Compute $\tilde{a}_{k,i} = -imM_{k,i}-\sum_{j=1}^{i-1}\tilde{a}_{k,j}M_{k,i-j}$ $(k=1,\cdots,n,~i=1,\cdots,m-1)$.
\STATE Compute $\tilde{b}_{k,i} = \frac{1}{i}\sum_{v=1}^{i}\tilde{a}_{k,v}\tilde{b}_{k,i-v}$ $(k=1,\cdots,n,~i=1,\cdots,m-1)$ with $\tilde{b}_{k,0}=1$.
\STATE Return $w_{k,i}=w_{k,0}\tilde{b}_{k,i}$ $(k=1,\cdots,n,~i=0,\cdots,m-1)$.
\end{algorithmic}
\end{algorithm}

\subsection{Lower order Hermite-Fej\'{e}r interpolation}
In particular, from (2.5), the barycentric weights for lower order Hermite-Fej\'{e}r barycentric interpolation can be given in the explicit forms.

\begin{itemize}
\item $m=2$: $ w_{k,1}=-\frac{\omega''_n(x_k)}{\omega'_n(x_k)}w_{k,0}$. Moreover, for the Gauss-Jacobi pointsystem,
$$ w_{k,1}=\left(\beta-\alpha-(\alpha+\beta+2)x_k\right)\overline{w}_{k},,\quad k=1,2,\ldots,n,
$$
while for the Jacobi-Gauss-Lobatto pointsystem,
$$w_{1,1}=\left(1+\frac{(n-2)(n+\alpha+\beta-1)}{\beta+1}\right)\beta\widehat{w}_1,\quad
w_{n,1}=\left(-1-\frac{(n-2)(n+\alpha+\beta-1)}{\alpha+1}\right)\alpha\widehat{w}_n$$
 and $$w_{k,1}=\left(\beta-\alpha-(\alpha+\beta-2)x_k\right)\frac{\widehat{w}_{k}}{1-x_k^2},\quad k=2,\ldots,n-1.$$

\item $m=3$: $$w_{k,1}=-\frac{3\omega''_n(x_k)}{2\omega'_n(x_k)}w_{k,0},\quad\quad w_{k,2}=\left(-\frac{\omega^{(3)}_n(x_k)}{2\omega'_n(x_k)}+\frac{3}{2}\left(\frac{\omega''_n(x_k)}{\omega'_n(x_k)}\right)^2\right)w_{k,0}.$$

\item $m=4$:
$$w_{k,1}=-\frac{2\omega''_n(x_k)}{\omega'_n(x_k)}w_{k,0},\quad\quad
w_{k,2}=\left(-\frac{2\omega^{(3)}_n(x_k)}{3\omega'_n(x_k)}+\frac{5}{2}\left(\frac{\omega''_n(x_k)}{\omega'_n(x_k)}\right)^2\right)w_{k,0}$$
and
$$w_{k,3}=\left(-\frac{\omega^{(4)}_n(x_k)}{6\omega'_n(x_k)}+\frac{5\omega''_n(x_k)\omega^{(3)}_n(x_k)}{3\omega'_n(x_k)\omega'_n(x_k)}
-\frac{5}{2}\left(\frac{\omega''_n(x_k)}{\omega'_n(x_k)}\right)^3\right)w_{k,0}.$$
\end{itemize}


\section{Illustration of numerical stability and numerical examples}

The stability for the second barycentric formulas for Lagrange interpolation has been extensively studied by Henrici \cite{Henricibook1982}, Berrut and Trefethen \cite{Berrut2004}. Rigorous arguments
that make this intuitive idea precise are provided by Higham \cite{Highambook,Higham2004}. 
For more details, see \cite{Berrut2004,Highambook,Higham2004}.

These arguments can be directly applied to the second barycentric higher order Hermite-Fej\'{e}r formula (2.15) if the barycentric weights can be evaluated well and do not suffer from overflows or underflows, which makes barycentric interpolation entirely reliable
in practice for small values of $m$ at the pointsystems, such as the Chebyshev pointsystem (1.3) and Jacobi-Gauss-Lobatto pointsystem for $\alpha=\beta=1.5$, i.e. the roots of $(1-x^2)P_{n-2}^{(\frac{3}{2},\frac{3}{2})}(x)$, studied in this paper.

To be pointed out especially, although both the $\mathbf{Algorithm~ 1}$ and $\mathbf{Algorithm~ 2}$ enjoy the fast implementation at the cost of $O(nm^2)$ operations for pointsystems discussed in Section 2, the performances are not always same. More specifically, they own the same high accuracy for small $m$ and different outcomes for larger $m$. The $\mathbf{Algorithm~ 2}$ manifests better stability for larger $m$ in our numerical experiments. Form the descriptions of the two algorithms, we can see that the first algorithm needs one more step than the second algorithm. This extra step leads to a great loss of significance due to the fast growth of the entries in these two algorithms for large $m$.  So, the $\mathbf{Algorithm~ 2}$ is recommended in practice.

Here, we use four functions to test the accuracy and stability for the second Hermite-Fej\'{e}r barycentric interpolation form  with $\mathbf{Algorithm~ 2}$ at the Gauss-Jacobi and Jacobi-Gauss-Lobatto pointsystems with $f(x) =1/(1+x^2)$, which is analytic
in a neighborhood of $[-1,1]$, $C^{\infty}$ function $f(x)=e^{-1/x^2}$, and nonsmooth functions $f(x)=1-|x|^3$.
Figures 3.1-3.3 illustrate the performance of the second barycentric interpolant (2.15) for the above first three
functions at Chebyshev pointsystem (1.3) and Jacobi-Gauss-Lobatto pointsystem  with $\alpha=\beta=1.5$
by using $n=10:20:2000$ grid points and
$m=2^{j-1}$ ($j=1,2,\ldots,6$) derivatives under the $\infty$-norm for the vector $\|f(x)-H_{mn-1}(f,x)\|_{\infty}$ at $x=-1:0.02:1$.

\begin{figure}[htbp]
\centerline{\includegraphics[height=4in,width=5.8in]{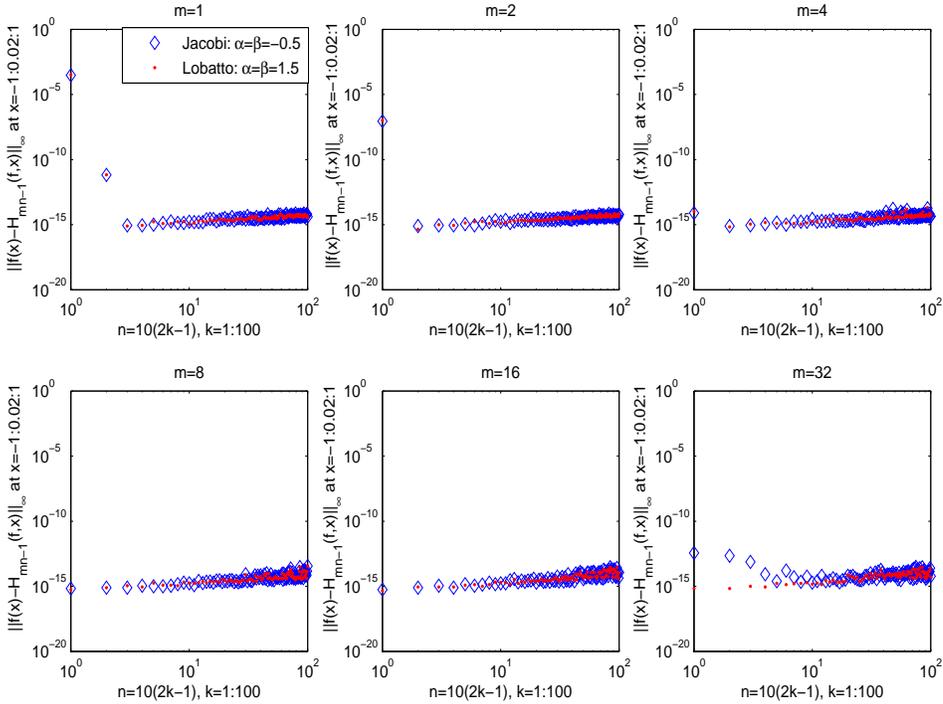} }
  \caption{$\|f(x)-H_{mn-1}(f,x)\|_{\infty}$ at $x=-1:0.02:1$ with $n=10:20:2000$ and $m=2^{j-1}$ ($j=1,2,\ldots,6$) for $f(x)=1/(1+x^2)$  at the Chebyshev pointsystem (1.3)  and Jacobi-Gauss-Lobatto pointsystem  with $\alpha=\beta=1.5$, respectively.}
\end{figure}

\begin{figure}[htbp]
\centerline{\includegraphics[height=4in,width=5.8in]{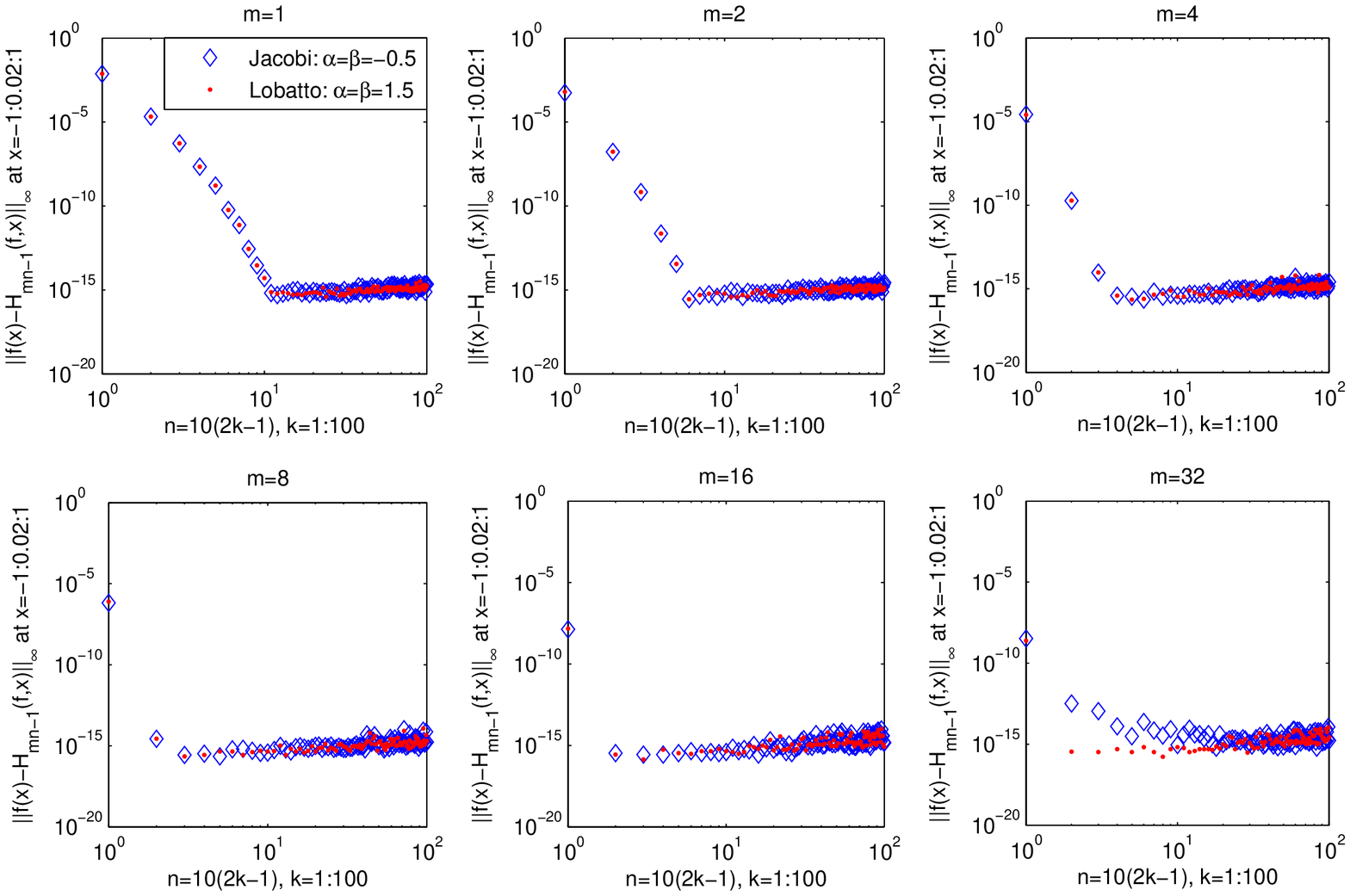} }
  \caption{$\|f(x)-H_{mn-1}(f,x)\|_{\infty}$ at $x=-1:0.02:1$ with $n=10:20:2000$ and $m=2^{j-1}$ ($j=1,2,\ldots,6$) for $f(x)=e^{-1/x^2}$  at the Chebyshev pointsystem (1.3)  and Jacobi-Gauss-Lobatto pointsystem  with $\alpha=\beta=1.5$, respectively.}
\end{figure}

\begin{figure}[htbp]
\centerline{\includegraphics[height=3.8in,width=5.8in]{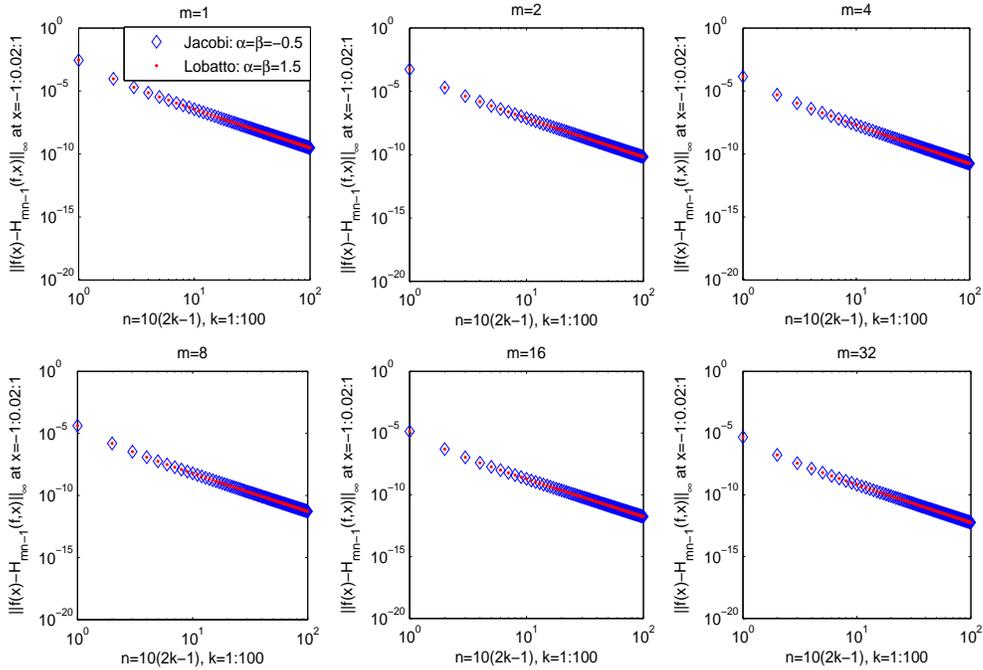} }
  \caption{$\|f(x)-H_{mn-1}(f,x)\|_{\infty}$ at $x=-1:0.02:1$ with $n=10:20:2000$ and $m=2^{j-1}$ ($j=1,2,\ldots,6$) for $f(x)=1-|x|^3$  at the Chebyshev pointsystem (1.3)  and Jacobi-Gauss-Lobatto pointsystem with $\alpha=\beta=1.5$, respectively.}
\end{figure}

%

From these examples, we can see that the barycentric interpolation is quite stable for small values of $m$. However, when $m$ is too large, the simplified barycentric weights will suffer from overflows or underflows too. Figure 3.5 shows the maximum $m$ for a fixed $n$ in the computation of the simplified barycentric weights $w_{k,r}$ $(k=1:n,r=0:m-1)$ by \textbf{Algorithm 2} before the overflows or underflows occurre.

\begin{figure}[htbp]
\centerline{\includegraphics[height=1.6in,width=2.7in]{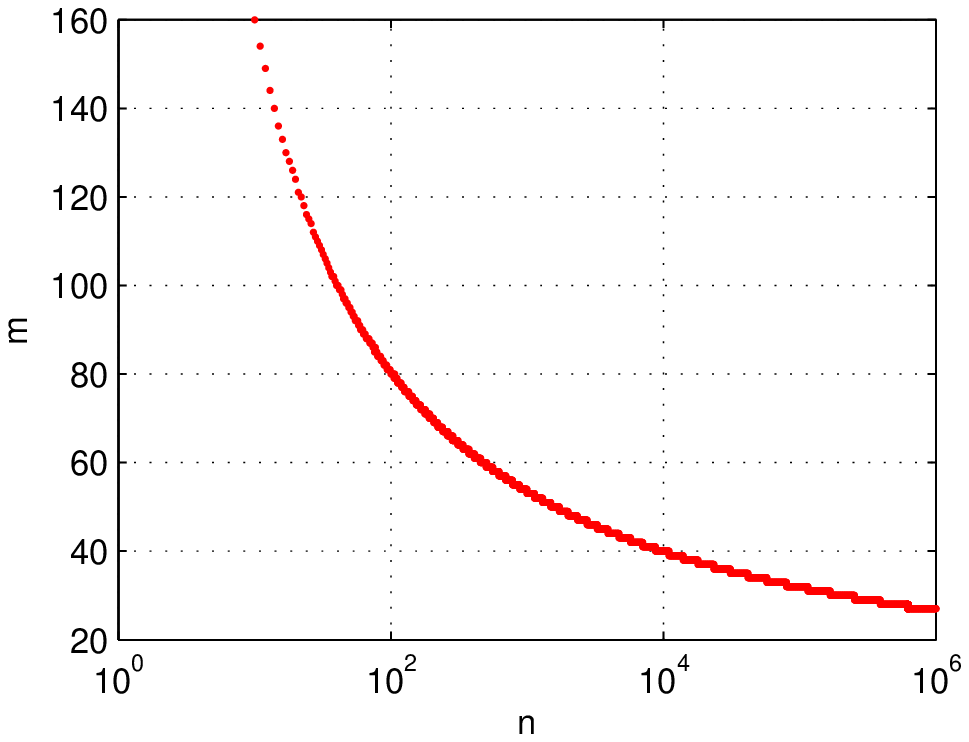} \includegraphics[height=1.6in,width=2.7in]{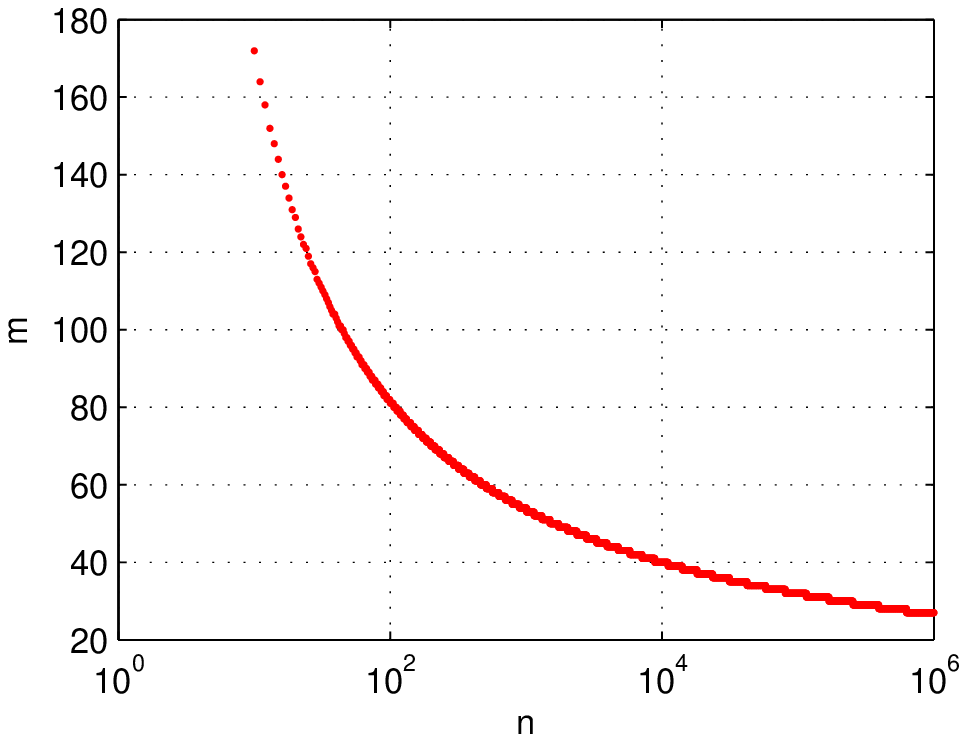}}
  \caption{The maximum number $m$ for a fixed $n$ before the overflows or underflows occurred for Gauss-Jacobi pointsystem $\alpha=\beta=-0.5$ (left) and for Jacobi-Gauss-Lobatto pointsystem $\alpha=\beta=1.5$ (right): $n=10:10^6$.}
\end{figure}

%

\section{Final remarks}
It is remarkable that Chebyshev pointsystems (1.3) and (2.32) are fairly nice in Lagrange polynomial approximation (see \cite{Trefethen1,Trefethen2,XiangChenWang}). However, for (higher order) Hermite-Fej\'{e}r interpolation, the Chebyshev pointsystem (2.32) completely fails (see Figure 4.1). The good choice is Chebyshev pointsystem (1.3) or the roots of $(1-x^2)P_{n-2}^{(\frac{3}{2},\frac{3}{2})}(x)$, since pointsystem (2.32) is not normal and the latter two pointsystems are strongly normal for $m=2$.

\begin{figure}[htbp]
\centerline{\includegraphics[height=3in,width=6in]{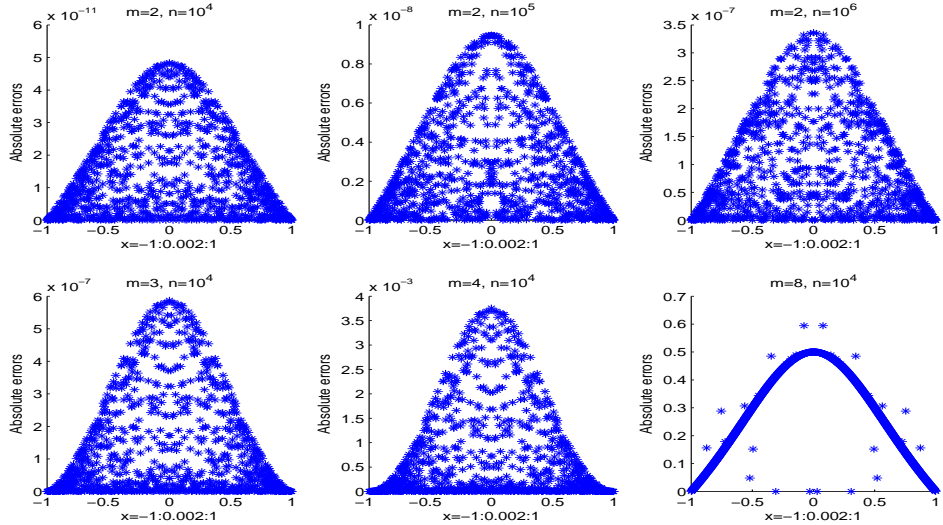}}
  \caption{The absolute errors of $H_{mn-1}(f,x)-f(x)$ at $x=-1:0.002:1$ with different $m$ and $n$ by using Chebyshev pointsystem (2.32) for $f(x)=\frac{1}{1+x^2}$.}
\end{figure}

For strongly normal pointsystem satisfying (1.6), V\'{e}rtesi \cite{Vertesi1979a} proved that for each $f\in C^1[-1,1]$,
$$
\|E(f)\|_{\infty}=\max_{x\in [-1,1]}|H_{2n-1}(f,x)-f(x)|\le \left(4+\frac{2}{c}\right)\min_{q_j\in {\cal P}_{2n-2}}\|f'-q_j\|_{\infty}
$$
where ${\cal P}_{2n-2}$ denotes the set of all polynomials of degree at most
$2n-1$ with real coefficients.

If $f$ is analytic or of finite limited regularity, the convergence rate on Hermite-Fej\'{e}r interpolation $H_{2n-1}(f,x)$ at Gauss-Jacobi pointsystem can be improved and given explicitely based on the asymptotics of the coefficients of Chebyshev series for $f$.

Suppose $f(x)$ satisfies a
Dini-Lipschitz condition on $[-1,1]$, then it has the following absolutely and
uniformly convergent Chebyshev series expansion (see Cheney \cite[pp 129]{Cheney})
\begin{equation}
f(x)=\sum_{j=0}^{\infty}{'}c_jT_j(x),\quad c_j=\frac{2}{\pi}\int_{-1}^{1}\frac{f(x)T_j(x)}{\sqrt{1-x^2}}dx,\quad
j=0,1,\ldots.
\end{equation}
where the prime denotes summation whose first term is halved,
$T_j(x)=\cos(j\cos^{-1}x)$ denotes the Chebyshev polynomial of
degree $j$.

\begin{lemma} (i)  (Bernstein \cite{Bernstein}) If $f$ is analytic with $|f(z)|\le
M$ in the region bounded by the ellipse ${\cal E}_{\rho}$ with foci $\pm1$ and major
and minor semiaxis lengths summing to $\rho>1$, then for each $j\ge
0$,
\begin{equation}
|c_j|\le {\displaystyle\frac{2M}{\rho^j}}.
\end{equation}

(ii) (Trefethen \cite{Trefethen1,Trefethen2}) For an integer $k\ge 1$, if $f(x)$ has an
absolutely continuous $(k-1)$st derivative $f^{(k-1)}$ on $[-1,1]$
and a $k$th derivative $f^{(k)}$ of bounded variation
$V_k={\rm Var}(f^{(k)})<\infty$, then for each $j\ge k+1$,
\begin{equation}
|c_j|\le{\displaystyle\frac{2V_k}{\pi j(j-1)\cdots(j-k)}}.
\end{equation}
\end{lemma}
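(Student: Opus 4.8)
The plan is to handle the two parts by the classical arguments of Bernstein and Trefethen, both starting from the trigonometric form of the Chebyshev coefficient. Substituting $x=\cos\theta$ in (4.1) and using $T_j(\cos\theta)=\cos(j\theta)$ gives
\[
c_j=\frac{2}{\pi}\int_0^\pi f(\cos\theta)\cos(j\theta)\,d\theta,\qquad j\ge 0,
\]
which is the common starting point. Writing $F(z)=f\!\left(\frac12(z+z^{-1})\right)$, the map $z\mapsto\frac12(z+z^{-1})$ sends the circle $|z|=r$ ($r>1$) onto an ellipse with foci $\pm1$ and semiaxis sum $r$, so $F$ is analytic on the annulus $\rho^{-1}<|z|<\rho$ whenever $f$ is analytic inside ${\cal E}_\rho$, and on $|z|=1$ one has $F(e^{i\theta})=f(\cos\theta)$.

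For part (i) I would identify $c_j$ with a Laurent coefficient of $F$. Expanding $\cos(j\theta)=\frac12(z^j+z^{-j})$ with $z=e^{i\theta}$ shows that the $j$th Laurent coefficient $\widehat F_j=\frac{1}{2\pi i}\oint_{|z|=1}F(z)z^{-j-1}\,dz$ equals $\frac12 c_j$ for $j\ge1$ (and $\widehat F_0=\frac12 c_0$), the symmetry $F(z)=F(1/z)$ making the negative-index coefficients redundant. Since $F$ is analytic on the annulus, I would deform the contour to $|z|=r$ with $1\le r<\rho$; there $\frac12(z+z^{-1})$ lies on an ellipse interior to ${\cal E}_\rho$, so $|F(z)|\le M$, and the Cauchy estimate gives $|\widehat F_j|\le M r^{-j}$. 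Letting $r\to\rho^-$ and using $|c_j|=2|\widehat F_j|$ yields $|c_j|\le 2M\rho^{-j}$; the case $j=0$ follows directly from $\int_{-1}^1(1-x^2)^{-1/2}dx=\pi$.

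For part (ii) I would argue by induction on $k$, the order of the derivative assumed to be of bounded variation. The base case $k=0$ (that is, $f$ itself of bounded variation $V_0$) comes from a single integration by parts in the displayed integral: the boundary terms vanish because $\sin(j\theta)$ vanishes at $0$ and $\pi$, and since $\cos$ is monotone on $[0,\pi]$ the resulting Stieltjes integral is bounded by ${\rm Var}_{[-1,1]}(f)=V_0$, giving $|c_j|\le 2V_0/(\pi j)$ for $j\ge1$. For the inductive step I would invoke the standard Chebyshev differentiation recurrence $2j\,c_j=c_{j-1}^{(1)}-c_{j+1}^{(1)}$, where $c_j^{(1)}$ denote the Chebyshev coefficients of $f'$; applying the induction hypothesis at order $k-1$ to $f'$ (whose $(k-1)$st derivative $f^{(k)}$ has variation $V_k$) bounds $|c_{j-1}^{(1)}|$ and $|c_{j+1}^{(1)}|$ by the falling-factorial expression at order $k-1$, and then $|c_j|\le\frac{1}{2j}\left(|c_{j-1}^{(1)}|+|c_{j+1}^{(1)}|\right)$ is to be collapsed into the claimed bound at order $k$.

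The main obstacle is exactly this last collapse, since it is what produces the falling factorial $j(j-1)\cdots(j-k)$ rather than the crude power $j^{-(k+1)}$ that naive repeated integration by parts would give. Concretely, after the inductive substitution the bound reduces to the elementary inequality
\[
\frac{1}{2j}\left(\frac{1}{(j-1)\cdots(j-k)}+\frac{1}{(j+1)j\cdots(j-k+2)}\right)\le\frac{1}{j(j-1)\cdots(j-k)},
\]
valid for $j\ge k+1$. I would verify it by clearing the common factor $1/(2j)$ and reducing to $(j-1)\cdots(j-k)\le (j+1)j\cdots(j-k+2)$, which holds because both sides are products of $k$ factors whose terms, compared in decreasing order, satisfy $j+2-i\ge j-i$; this term-by-term domination is the key combinatorial point. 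Keeping careful track of the index ranges so that every factor stays positive for $j\ge k+1$ is the one place where the bookkeeping must be done precisely.
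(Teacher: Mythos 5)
The paper states this lemma without proof, simply citing Bernstein and Trefethen, so there is no internal argument to compare against; your proposal supplies the standard proofs from those sources and does so correctly. Part (i) is the classical Bernstein ellipse argument: identifying $c_j$ with twice the $j$th Laurent coefficient of $F(z)=f\bigl(\tfrac12(z+z^{-1})\bigr)$ on the annulus $\rho^{-1}<|z|<\rho$ and applying the Cauchy estimate on $|z|=r$ before letting $r\to\rho^-$ is exactly right, and your separate remark for $j=0$ is consistent with the bound $2M/\rho^0$. Part (ii) follows Trefethen's proof (ATAP, Theorem 7.1): the base case via one integration by parts and the bound $\bigl|\int_0^\pi\sin(j\theta)\,df(\cos\theta)\bigr|\le V_0$ (using monotonicity of $\cos$ on $[0,\pi]$), then induction through the coefficient recurrence $2jc_j=c^{(1)}_{j-1}-c^{(1)}_{j+1}$. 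The index bookkeeping checks out: for $j\ge k+1$ one has $j-1\ge k$, so the order-$(k-1)$ hypothesis applies to both $c^{(1)}_{j\pm1}$, and the final elementary inequality reduces, as you say, to the term-by-term domination $(j-1)\cdots(j-k)\le(j+1)j\cdots(j-k+2)$ of two products of $k$ positive factors. No gaps.
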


\begin{lemma}
Suppose $\{x_j\}_{j=1}^n$ are the roots of $P_n^{(\alpha,\beta)}(x)$ ($\alpha,\beta>-1$), then it follows
\begin{equation}
\quad  (x-x_j)\ell_j(x)=\sigma_n\frac{\sqrt{(1-x_j^2)\overline{w}_j}}{2^{(\alpha+\beta+1)/2}}\sqrt{\frac{n!\Gamma(n+\alpha+\beta+1)}
{\Gamma(n+\alpha+1)\Gamma(n+\beta+1)}}P_n^{(\alpha,\beta)}(x),j=1,2,\ldots,n.
\end{equation}
\end{lemma}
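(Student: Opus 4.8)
The plan is to collapse the identity to a single scalar coefficient and then read that coefficient off from the closed form of the Gauss--Jacobi weights already recorded in (2.22). First I would use the definition $\ell_j(x)=\frac{\omega_n(x)}{\omega_n'(x_j)(x-x_j)}$ to cancel the factor $x-x_j$, so that
\[
(x-x_j)\ell_j(x)=\frac{\omega_n(x)}{\omega_n'(x_j)}.
\]
Since the nodes are the zeros of $P_n^{(\alpha,\beta)}$, we have $\omega_n(x)=P_n^{(\alpha,\beta)}(x)/K_n$ with $K_n$ the leading coefficient, whence $\omega_n'(x_j)=P_n^{(\alpha,\beta)'}(x_j)/K_n$ and
\[
(x-x_j)\ell_j(x)=\frac{P_n^{(\alpha,\beta)}(x)}{P_n^{(\alpha,\beta)'}(x_j)}.
\]
Thus the whole lemma is equivalent to identifying the scalar $1/\omega_n'(x_j)=K_n/P_n^{(\alpha,\beta)'}(x_j)$ with the displayed product of radicals times $K_n^{-1}$, i.e. to evaluating $C_n^{(\alpha,\beta)}/K_n$.

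Next I would invoke the explicit form (2.22). Comparing the base of the $m$-th power in (2.22) with $w_{k,0}=1/(\omega_n'(x_k))^m$ from (2.5) gives $1/\omega_n'(x_j)=C_n^{(\alpha,\beta)}(-1)^{j+1}\sqrt{(1-x_j^2)\overline{w}_j}$. Substituting $\omega_n(x)=P_n^{(\alpha,\beta)}(x)/K_n$ and inserting the classical leading coefficient $K_n=\Gamma(2n+\alpha+\beta+1)/(2^n\,n!\,\Gamma(n+\alpha+\beta+1))$ together with the definition of $C_n^{(\alpha,\beta)}$ and $\sigma_n$, the factor $\Gamma(2n+\alpha+\beta+1)$ and the powers of $2$ cancel, leaving
\[
\frac{C_n^{(\alpha,\beta)}}{K_n}=\frac{\sigma_n}{2^{(\alpha+\beta+1)/2}}\sqrt{\frac{n!\,\Gamma(n+\alpha+\beta+1)}{\Gamma(n+\alpha+1)\Gamma(n+\beta+1)}},
\]
which is exactly the constant in the statement; multiplying by $P_n^{(\alpha,\beta)}(x)$ then yields the asserted formula. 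Equivalently, one may bypass (2.22) and insert the classical Gauss--Jacobi Christoffel number $\overline{w}_j=2^{\alpha+\beta+1}\frac{\Gamma(n+\alpha+1)\Gamma(n+\beta+1)}{n!\,\Gamma(n+\alpha+\beta+1)}\big/\big((1-x_j^2)[P_n^{(\alpha,\beta)'}(x_j)]^2\big)$ from Szeg\"{o} and take square roots; the two routes agree after the same Gamma-function and power-of-two bookkeeping.

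The only delicate point, and the step I expect to be the main obstacle, is the sign. The radical $\sqrt{(1-x_j^2)\overline{w}_j}$ is nonnegative, whereas $P_n^{(\alpha,\beta)'}(x_j)$ alternates in sign as $x_j$ runs through the ordered zeros, so matching the two sides genuinely requires the alternating factor $(-1)^{j+1}$ carried along in (2.22) in addition to the parity constant $\sigma_n$. I would therefore track the branch of the square root carefully through the extraction $1/P_n^{(\alpha,\beta)'}(x_j)=\pm\sqrt{(1-x_j^2)\overline{w}_j}/\sqrt{G_n}$, since this global-sign determination is the substantive part while all remaining manipulations are routine algebra.
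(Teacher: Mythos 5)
Your proposal is correct and follows essentially the same route as the paper: both reduce the identity to evaluating $\omega_n(x)/\omega_n'(x_j)$ and then invoke the explicit Gauss--Jacobi barycentric weight formula of Wang et al.\ (the paper via the normalization constant $z_n$ and the formula cited from Hale--Townsend and \cite{Wang2013}, you via (2.22) together with the ratio $C_n^{(\alpha,\beta)}/K_n$), with the same Gamma-function bookkeeping. Your insistence on tracking the alternating factor $(-1)^{j+1}$ is well taken --- the paper's own displayed computation carries $\sigma_n(-1)^j$ while the lemma statement drops the alternating sign, a harmless discrepancy since the lemma is only used inside absolute values in Theorem 4.3.
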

\begin{proof}
Let $z_n=\int_{-1}^1(1-x)^{\alpha}(1+x)^{\beta}[P_n^{(\alpha,\beta)}(x)]^2dx$ and $K_n$ the leading coefficient of $P_n^{(\alpha,\beta)}(x)$. From Abramowitz and Stegun \cite{Abram}, we have
$$
z_n=\frac{2^{\alpha+\beta+1}}{2n+\alpha+\beta+1}\cdot\frac{\Gamma(n+\alpha+1)\Gamma(n+\beta+1)}{n!\Gamma(n+\alpha+\beta+1)},\quad K_n=\frac{1}{2^{n}}\frac{\Gamma(2n+\alpha+\beta+1)}{n!\Gamma(n+\alpha+\beta+1)}.
$$
Furthermore, by Hale and Townsend \cite{HaleTownsend} and Wang et al. \cite{Wang2013}, we obtain
$$\begin{array}{lll}
(x-x_j)\ell_j(x)=\frac{w_n(x)}{w_n'(x_j)}&=&\sigma_n(-1)^j\sqrt{\frac{K_n^22n(1-x_j^2)\overline{w}_j}{2n(2n+\alpha+\beta+1)z_n}}w_n(x)\\
&=&\sigma_n(-1)^j\sqrt{\frac{(1-x_j^2)\overline{w}_j}{(2n+\alpha+\beta+1)z_n}}P_n^{(\alpha,\beta)}(x),\end{array}
$$
which leads to the desired result (4.4).
\end{proof}

\begin{theorem}
Suppose $\{x_j\}_{j=1}^n$ are the roots of $P_n^{(\alpha,\beta)}(x)$ ($-1<\alpha,\beta\le 0$), then the Hermite-Fej\'{e}r interpolation (1.5) at  $\{x_j\}_{j=1}^n$ has the convergence rate
\begin{equation}\|E(f)\|_{\infty}\le\left\{\begin{array}{ll} {\displaystyle\frac{4\tau_nM[2n\rho^2+(1-2n)\rho]}{(\rho-1)^2\rho^{2n}}}\quad(n\ge 1),&\mbox{if $f$ analytic in ${\cal E}_{\rho}$ with $|f(z)|\le M$}\\
{\displaystyle\frac{4\tau_nV_k}{(k-1)\pi
(2n-1)(2n-2)\cdots(2n-k+1)}},&\mbox{if $f,\ldots,f^{(k-1)}$ absolutely continuous }\\
&\mbox{ and $V_k<\infty$, $n\ge k/2$, $k\ge 2$},\end{array}\right.
\end{equation}
where $E(f,x)=f(x)-H_{2n-1}(f,x)$, and
\begin{equation}
\tau_n=\left\{\begin{array}{ll}
O(n^{-1.5-\min\{\alpha,\beta\}}\log n),& \mbox{if $-1<\min\{\alpha,\beta\}\le\max\{\alpha,\beta\}\le -\frac{1}{2}$}\\
O(n^{2\max\{\alpha,\beta\}-\min\{\alpha,\beta\}-\frac{1}{2}}),& \mbox{if $-1<\min\{\alpha,\beta\}\le -\frac{1}{2}<\max\{\alpha,\beta\}\le 0$}\\
O(n^{2\max\{\alpha,\beta\}}),& \mbox{if $-\frac{1}{2}<\min\{\alpha,\beta\}\le \max\{\alpha,\beta\}\le 0$}\end{array}.\right.
\end{equation}
\end{theorem}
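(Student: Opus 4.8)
The plan is to expand $f$ in its Chebyshev series $f=\sum_{j=0}^{\infty}{}'c_jT_j$ (legitimate under the Dini--Lipschitz hypothesis, which both the analytic and the $C^{k}$ assumptions entail), and to exploit the linearity of the interpolation operator together with the representation of the fundamental functions from Lemma~4.2. Since the operator reproduces constants, $E(T_0)=0$, and the whole problem reduces to a sharp per-mode estimate for $\|E(T_j)\|_\infty$ plus a summation against the coefficient bounds of Lemma~4.1. So the scaffold is
\[
\|E(f)\|_\infty\le\sum_{j\ge1}|c_j|\,\|E(T_j)\|_\infty .
\]

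For the per-mode estimate I would start from the identity $E(T_j,x)=\sum_{k=1}^nT_j'(x_k)\,b_k(x)$, which for $j\le 2n-1$ is exactly the gap between the Hermite--Fej\'er value and the full Hermite interpolant (the latter reproducing every polynomial of degree $\le 2n-1$). By Lemma~4.2, $(x-x_k)\ell_k(x)=\sigma_n\gamma_kP_n^{(\alpha,\beta)}(x)$ with $\gamma_k^2=\lambda_n(1-x_k^2)\overline{w}_k$ and $\lambda_n=2^{-(\alpha+\beta+1)}n!\,\Gamma(n+\alpha+\beta+1)/[\Gamma(n+\alpha+1)\Gamma(n+\beta+1)]$, hence $b_k(x)=\gamma_k^2\,[P_n^{(\alpha,\beta)}(x)]^2/(x-x_k)$ and
\[
E(T_j,x)=\lambda_n\,[P_n^{(\alpha,\beta)}(x)]^2\sum_{k=1}^n\frac{(1-x_k^2)\,\overline{w}_k\,T_j'(x_k)}{x-x_k}.
\]
The inner sum is a Gauss--Jacobi quadrature expression exact on polynomials of degree $\le 2n-1$; I expect this exactness to be precisely what makes the contributions of the modes $j\le 2n-1$ cancel after summation against $c_j$, leaving an effective tail from $j=2n$. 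Feeding in the elementary bound $|T_j'(x_k)|\le j/\sqrt{1-x_k^2}$ produces the factor $j$ and the prefactor
\[
\tau_n:=\sup_{x\in[-1,1]}\lambda_n\,[P_n^{(\alpha,\beta)}(x)]^2\sum_{k=1}^n\frac{\sqrt{1-x_k^2}\,\overline{w}_k}{|x-x_k|},
\]
so that, combined with the full Hermite remainder $f-\widetilde{H}f=\sum_{j\ge2n}c_j(T_j-\widetilde{H}T_j)$, I aim for $\|E(f)\|_\infty\le 2\tau_n\sum_{j\ge 2n}j\,|c_j|$.

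The hard part will be to pin down the order of $\tau_n$ in the three regimes of (4.6). Here I would insert Szeg\H{o}'s asymptotics for $P_n^{(\alpha,\beta)}$ --- the interior Darboux formula in the bulk together with the Mehler--Heine/Bessel description in the $O(n^{-2})$ boundary layers near $\pm1$ --- and the matching sizes of the Gauss weights $\overline{w}_k$, then estimate the weighted Lebesgue-type sum by separating the boundary layers from the interior and treating $x$ near a node and $x$ away from all nodes separately. The exponents $\alpha,\beta$ enter through the endpoint behaviour of $[P_n^{(\alpha,\beta)}]^2$ weighed against $\overline{w}_k$, and the switch of the dominant contribution between the two endpoints and the interior as $\alpha,\beta$ cross $-\tfrac12$ is exactly what forces the split into $\max\{\alpha,\beta\}\le-\tfrac12$, the mixed case, and $\min\{\alpha,\beta\}>-\tfrac12$. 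This is the genuinely delicate step; the handling of the Cauchy-type factor $1/(x-x_k)$ (a principal-value contribution when $x$ sits between nodes) is where I expect the main technical friction.

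Finally I would substitute Lemma~4.1 and sum. In the analytic case $|c_j|\le 2M\rho^{-j}$ gives
\[
\|E(f)\|_\infty\le 2\tau_n\sum_{j\ge 2n}j\,|c_j|\le 4M\tau_n\sum_{j\ge 2n}\frac{j}{\rho^{\,j}}
=\frac{4M\tau_n\,[\,2n\rho^2+(1-2n)\rho\,]}{(\rho-1)^2\rho^{2n}},
\]
which is the first line of (4.5); in the finite-regularity case $|c_j|\le 2V_k/[\pi j(j-1)\cdots(j-k)]$ turns the weighted tail into $\sum_{j\ge 2n}[(j-1)(j-2)\cdots(j-k)]^{-1}$, which telescopes to a falling-factorial reciprocal and yields the second line of (4.5). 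Once the per-mode bound and the degree-$(2n-1)$ exactness are secured, these two summations are routine; the estimation of $\tau_n$ is the only substantive obstacle.
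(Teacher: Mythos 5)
Your scaffold---Chebyshev expansion, a per-mode bound for $\|E(T_j)\|_\infty$, then summation of the tail against Lemma 4.1---is exactly the paper's, and your two closing summations (the geometric-times-linear tail and the telescoping falling-factorial tail) are correct. But the central step fails as you have set it up. You read $H_{2n-1}$ as the zero-derivative Hermite--Fej\'{e}r operator, so that $E(T_j)=\sum_k T_j'(x_k)b_k(x)$ for $j\le 2n-1$, and you then ``expect'' Gauss--Jacobi exactness to make these low modes cancel after summation against $c_j$. They do not: their aggregate is $\sum_k p'(x_k)b_k(x)$ with $p$ the degree-$(2n-1)$ partial Chebyshev sum of $f$, which is generically of size $\tau_n\|f'\|_\infty$ and in particular not exponentially small. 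Concretely, for $f(x)=x$ at the Chebyshev nodes (1.3) the zero-derivative operator has error $n^{-2}T_n^2(x)\sum_k(1-x_k^2)/(x-x_k)$, which equals exactly $1/n$ at $x=1$; with your reading the first line of (4.5) is simply false. The paper's proof takes $d_k=f'(x_k)$ in (1.5), so that $E(T_j)\equiv 0$ for every $j\le 2n-1$ and only the tail $j\ge 2n$ survives---that is the exactness that kills the low modes, and it is unavailable to the operator you chose. Relatedly, for $j\ge 2n$ you bound only the derivative part $\sum_k T_j'(x_k)b_k(x)$; one must also control the value part, and the paper does so via $|\sum_k T_j(x_k)h_k(x)|\le\sum_k h_k(x)=1$, which rests on Fej\'{e}r's normality of the Gauss--Jacobi points ($h_k\ge 0$ precisely because $-1<\alpha,\beta\le 0$). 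That is where the hypothesis on $\alpha,\beta$ enters the convergence proof, and it is absent from your argument.

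The second substantive gap is that the heart of the theorem, the three-regime formula (4.6) for $\tau_n$, is not derived. By squaring Lemma 4.2 you arrive at $\lambda_n[P_n^{(\alpha,\beta)}(x)]^2\sum_k\sqrt{1-x_k^2}\,\overline{w}_k/|x-x_k|$ and defer its asymptotics to a boundary-layer analysis of a Cauchy-type (near principal-value) sum, which you yourself flag as the delicate step; nothing in the proposal actually produces the exponents in (4.6). The paper avoids the Cauchy kernel entirely by using Lemma 4.2 only once: $b_k(x)=\bigl[(x-x_k)\ell_k(x)\bigr]\ell_k(x)=\sigma_n\sqrt{\lambda_n(1-x_k^2)\overline{w}_k}\,P_n^{(\alpha,\beta)}(x)\ell_k(x)$, so that $T_j'(x_k)\sqrt{1-x_k^2}=j\sin(j\arccos x_k)$ absorbs the endpoint factor and the node sum is bounded by $\sqrt{\max_k\overline{w}_k}\,\sum_k|\ell_k(x)|\le\sqrt{\max_k\overline{w}_k}\,\Lambda_n$. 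The quantity $\tau_n$ is then the product $\|P_n^{(\alpha,\beta)}\|_\infty\cdot\sqrt{\max_k\overline{w}_k}\cdot\Lambda_n$, read off from three standard Szeg\H{o} asymptotics whose $-\tfrac12$ thresholds are exactly what generate the three cases of (4.6). If you want to salvage your route you must either carry out the weighted Cauchy-sum estimate in all three regimes or switch to the paper's single-factor decomposition and the Lebesgue constant.
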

\begin{proof}
Since the Chebyshev series expansion of $f(x)$ is uniformly convergent under the assumptions of Theorem 4.3, and the error of Hermite-Fej\'{e}r interpolation (1.5) on Chebyshev polynomials satisfies $|E(T_{j},x)|=|T_j(x)-H_{2n-1}(T_{j},x)|=0$ for $j=0,1,\ldots, 2n-1$, then it yields
\begin{equation}
|E(f,x)|=|f(x)-H_{2n-1}(f,x)|=|\sum_{j=0}^{\infty}|c_j||E(T_{ j},x)|\le \sum_{j=2n}^{\infty}|c_j||E(T_{j},x)|.
\end{equation}
Furthermore, $|E(T_{j},x)|=|T_j(x)-\sum_{i=1}^nT_j(x_i)h_i(x)-\sum_{i=1}^nT_j'(x_i)b_i(x)|$. In the following, we will fucus on estimates on $|E(T_{ j},x)|$ for $j\ge 2n$.

Notice that the pointsystem  is normal  which implies $h_i(x)\ge 0$ for all $i=1,2,\ldots,n$ and $x\in [-1,1]$,
$$
1\equiv\sum_{i=1}^nh_i(x)=\sum_{i=1}^nv_i(x)\ell^2_i(x)
$$
(see \cite{Fejer1932a}) and then
\begin{equation}
|\sum_{i=1}^nT_j(x_i)h_i(x)|\le \sum_{i=1}^nh_i(x)=1,\quad j=0,1,\ldots.
\end{equation}
Additionally, by Lemma 4.2, it obtains for $j=2n,2n+1,\ldots$ that
$$\begin{array}{lll}
&&|\sum_{i=1}^nT_j'(x_i)b_i(x)|\\&=& j|\sum_{i=1}^nU_{j-1}(x_i)(x-x_i)\ell_i^2(x)|\\
&=&\frac{j}{2^{(\alpha+\beta+1)/2}}\sqrt{\frac{n!\Gamma(n+\alpha+\beta+1)}
{\Gamma(n+\alpha+1)\Gamma(n+\beta+1)}}|P_n^{(\alpha,\beta)}(x)\sum_{i=1}^nU_{j-1}(x_i)\sqrt{(1-x_i^2)\overline{w}_i}\ell_i(x)|\\
&=&\frac{j}{2^{(\alpha+\beta+1)/2}}\sqrt{\frac{n!\Gamma(n+\alpha+\beta+1)}
{\Gamma(n+\alpha+1)\Gamma(n+\beta+1)}}|P_n^{(\alpha,\beta)}(x)\sum_{i=1}^n\sin((j-1)\arccos(x_i))\sqrt{\overline{w}_i}\ell_i(x)|\\
&=&jO\left(|P_n^{(\alpha,\beta)}(x)|\sqrt{\|\{\overline{w}_i\}_{i=1}^n\|_{\infty}}\Lambda_n\right)\end{array}
$$
since $\sqrt{\frac{n!\Gamma(n+\alpha+\beta+1)}
{\Gamma(n+\alpha+1)\Gamma(n+\beta+1)}}$ is decreasing as $n$ increases and then uniformly bounded on $n$ for $-1<\alpha,\beta\le 0$, where $\Lambda_n=\max_{x\in [-1,1]}\sum_{i=1}^n|\ell_i(x)|$ is the Lebesgue constant, which, together with
$${\small
P_n^{(\alpha,\beta)}(x)=\left\{\begin{array}{ll}
O(n^{-\frac{1}{2}}),&\mbox{if $\max\{\alpha,\beta\}\le -\frac{1}{2}$}\\
O(n^{\max\{\alpha,\beta\}}),&\mbox{if $\max\{\alpha,\beta\}> -\frac{1}{2}$}
\end{array}\right.,
\overline{w}_i=\left\{\begin{array}{ll}
O(n^{-2-2\min\{\alpha,\beta\}}),&\mbox{if $\min\{\alpha,\beta\}\le -\frac{1}{2}$}\\
O(n^{-1}),&\mbox{if $\min\{\alpha,\beta\}> -\frac{1}{2}$}
\end{array}\right.
}$$
(see Szeg\"{o} \cite[pp 168, 354]{Szego}) and
$$
\Lambda_n=\left\{\begin{array}{ll}
O(\log n ),& \mbox{if $\max\{\alpha,\beta\}\le -\frac{1}{2}$}\\
O(n^{\max\{\alpha,\beta\}+\frac{1}{2}}),& \mbox{if $\max\{\alpha,\beta\}> -\frac{1}{2}$}\end{array}\right.\mbox{\quad (\cite[pp 338]{Szego})},
$$
yields
\begin{equation}
|\sum_{i=1}^nT_j'(x_i)b_i(x)|=j\tau_n.
\end{equation}

Thus, by (4.8), (4.9) and (1.5), we find $|E(T_{ j},x)|\le 2+j\tau_n$ for $j\ge 2n$, and then the error of Hermite-Fej\'{e}r interpolation (4.7)  satisfies
$$
|E(f,x)|=|f(x)-H_{2n-1}(f,x)|\le \sum_{j=2n}^{\infty}|c_j||E(T_{ j},x)|=2\tau_n\sum_{j=2n}^{\infty}j|c_j|,
$$
which, following \cite{XiangChenWang}, leads to the desired result.
\end{proof}

From the definition of $\tau_n$ (4.6), we see that when $\alpha=\beta=-\frac{1}{2}$ the convergence order on $n$ is the lowest. In addition, from Szabados \cite{Sza1993} (also see Sadiq and Viswanath \cite{Sadiq2013}), we see that the convergence of the higher order Hermite-Fej\'{e}r interpolation (2.15) at the Chebyshev pointsystem (1.3) satisfies
\begin{equation}
\|f-H_{mn-1}(f)\|_{\infty}=\left\{\begin{array}{ll}O(\log n)\|f-p*\|_{C^{m-1}[-1,1]},&\mbox{if $m$ is odd}\\
O(1)\|f-p*\|_{C^{m-1}[-1,1]},&\mbox{if $m$ is even}\end{array}\right.
\end{equation}
where $p*$ is the best approximation polynomial of $f$ with degree at most $mn-1$ and $\|f-p*\|_{C^{m-1}[-1,1]}=\max_{0\le j\le m-1}\|f^{(j)}-(p^*)^{(j)}\|_{\infty}$.

Numerical examples also illustrate that the roots of $(1-x^2)P_{n-2}^{(\frac{3}{2},\frac{3}{2})}(x)$ are appropriate to higher order Hermite-Fej\'{e}r interpolation. In the future work, we will consider the convergence rates on this pointsystem.

It is worth noting that the new methods for Hermite barycentric weights at Gauss-Jacobi pointsystems or Jacobi-Gauss-Lobatto pointsystems can be extended to Jacobi-Gauss-Radau pointsystems or the roots of other kinds of orthogonal polynomials, such as Laguerre polynomials, Hermite polynomials, etc., based on the works of \cite{Glaser}, \cite{Wang2013} and {\sc Chebfun} \cite{Chebfun}.


\end{document}